
\documentclass[letterpaper, 10 pt, conference]{ieeeconf}  

\IEEEoverridecommandlockouts                              

\overrideIEEEmargins                                      

\usepackage{graphics} 
\usepackage{epsfig} 
\usepackage{mathptmx} 
\usepackage{times} 
\usepackage{amsmath} 
\usepackage{amssymb}  
\usepackage{amsmath,amsfonts,mathrsfs,latexsym,amscd}
\usepackage{mathtools}
\usepackage{physics}
\usepackage[ruled]{algorithm2e}
\usepackage{graphicx}
\usepackage[dvipsnames,svgnames]{xcolor}
\usepackage{hyperref}

\usepackage[dvipsnames,svgnames]{xcolor}
\colorlet{MyBlue}{DodgerBlue!75!Black}
\colorlet{MyGreen}{DarkGreen!95!Black}


\newcommand{\eps}{\varepsilon}
\DeclareMathOperator*{\argmin}{argmin}

\DeclareMathOperator{\zer}{zer}

\DeclareMathOperator{\dom}{dom}

\DeclareMathOperator{\gr}{graph}

\DeclareMathOperator{\resolvent}{\mathsf{J}}

\DeclareMathOperator{\Id}{Id}

\newcommand{\ce}{\mathtt{e}}

\DeclareMathOperator{\VI}{VI}
\DeclareMathOperator{\NE}{NE}


\newcommand{\bT}{\mathbf{T}}


\renewcommand{\iff}{\Leftrightarrow}

\renewcommand{\emptyset}{\varnothing}
\newcommand{\eqdef}{\triangleq}
\newcommand{\wlim}{\rightharpoonup}

\newcommand{\scrG}{\mathcal{G}}
\newcommand{\scrH}{\mathcal{H}}

\newcommand{\scrK}{\mathcal{K}}

\newcommand{\scrS}{\mathcal{S}}

\newcommand{\scrX}{\mathcal{X}}



\newcommand{\R}{\mathbb{R}}

\newcommand{\N}{\mathbb{N}}

\newcommand{\K}{\mathbb{K}}

\DeclareMathOperator{\NC}{\mathsf{N}}

\newcommand{\Lip}{L}

\newcommand{\opA}{\mathbb{A}}

\newcommand{\opG}{\mathbb{G}}

\newcommand{\opF}{\mathbb{F}}
\newcommand{\FB}{\mathtt{FB}}

\newtheorem{theorem}{Theorem}

\newtheorem{lemma}[theorem]{Lemma}


\newtheorem{assumption}{Assumption}


\newtheorem{example}{Example}


\DeclarePairedDelimiter{\inner}{\langle}{\rangle}

\pdfminorversion=4



\title{\LARGE \bf
Tikhonov regularized exterior penalty methods for hierarchical variational inequalities
}

\author{Meggie Marschner and Mathias Staudigl$^{1}$
\thanks{This research benefited from the support of the FMJH Program Gaspard Monge for optimization and operations research and their interactions with data science. MST research is sponsored by the Deutsche Forschungsgemeinschaft (DFG) - Projektnummer 556222748 ("Non-stationary hierarchical optimization")  }
\thanks{$^{1}$ Mannheim University, Department of Mathematics, B6 26, 68159 Mannheim
        {\tt\small m.staudigl@uni-mannheim},  {\tt\small m.marschner@uni-mannheim.de}}}%
   %

\begin{document}

\maketitle
\thispagestyle{empty}
\pagestyle{empty}

\begin{abstract}

\noindent We consider nested variational inequalities consisting in a (upper-level) variational inequality whose feasible set is given by the solution set of another (lower-level) variational inequality. This class of hierarchical equilibrium contains a wealth of important applications, including purely hierarchical convex bilevel optimization problems and certain multi-follower games. Working within a real Hilbert space setting, we develop a double loop prox-penalization algorithm with strong convergence guarantees towards a solution of the nested VI problem. We present various application that fit into our framework and present also some preliminary numerical results. 
\end{abstract}

\section{INTRODUCTION}

\noindent In a real Hilbert space setting, we consider the hierarchical variational inequality problem 
\begin{equation}\label{eq:P}\tag{P}
\VI(\opG,\scrS_{0}),\quad\text{where }\scrS_{0}:=\zer(\opA+\opF) 
\end{equation}
in which the problem data satisfy the following conditions: 
\begin{assumption}\label{ass:1}
\begin{enumerate}
\item $\opG:\scrH\to\scrH$ is a monotone and Lipschitz continuous map with Lipschitz constant $L_{G}$ and weakly sequentially continuous, i.e. $v^{n}\wlim v\Rightarrow \opG(v^{n})\wlim \opG(v)$; 
\item $\opF:\scrH\to\scrH$ is monotone and Lipschitz with Lipschitz constant $L_{F}$ and weakly sequentially continuous;  
\item $\opA:\scrH\to 2^{\scrH}$ is maximally monotone with $\dom(\opA)$ bounded;
\item The set $\scrS_{0}$ is nonempty. 
\end{enumerate}
\end{assumption}
Our assumptions imply that $\dom(\opF)=\scrH$, and thus $\opA+\opF$ is maximally monotone. Furthermore, it follows that $\scrS_{0}$ is closed and convex. The problem $\VI(\opG,\scrS_{0})$ is to find a point $\bar{u}\in\scrH$ such that 
\begin{equation}\label{eq:Sol}
\inner{\opG(\bar{u}),u-\bar{u}}\geq 0 \qquad\forall u\in\scrS_{0}=\zer(\opA+\opF).
\end{equation}
This nested VI problem encompasses a wide range of optimization and equilibrium problems involving a hierarchical structure. Concrete applications of this model framework can be found in signal processing \cite{Facchinei:2014aa}, equilibrium selection in Nash games \cite{Benenati:2022aa,Benenati:2023aa,Kaushik:2021aa}, inverse problems \cite{Yamada2011MinimizingTM}, certain classes of bilevel optimization \cite{Solodov:2007aa,yousefian2021bilevel}, and power allocation \cite{Pang:2008aa}, to mention a few. We give below a few concrete examples.

\begin{example}[Simple Bilevel Optimization]
\label{ex:SBP}
The seminal references \cite{Solodov:2007aa,dempe2010optimality,Dempe:2021aa} introduced simple bilevel problems of the form 
\begin{equation}\label{eq:SBP}
\min g(u) \quad\text{s.t.: } u\in\argmin_{v\in\scrH}\{r(v)+f(v)\} .
\end{equation}
Here $g,f:\scrH\to\R$ are convex Fr\'{e}chet-differentiable functions with Lipschitz continuous gradient $\opG=\nabla g$ and $\opF=\nabla f$, respectively. The function $r:\scrH\to \R\cup\{+\infty\}$ is a convex, proper and lower semicontinuous function, giving rise to the maximally monotone operator $\opA=\partial r$. Under the convexity assumptions, we can reformulate the problem \eqref{eq:SBP} as the hierarchical VI \eqref{eq:P}. More generally, we can consider structured convex optimization problems of the form 
$$
\min_{u}g(u)\quad\text{s.t. } u\in\argmin_{v\in\scrH}\{f(v)+r(Lv)\}.
$$
Using the Fenchel-Rockafellar duality in the lower level problem, we can reformulate the above as the saddle-point problem
\[
\min_{v}\{f(v)+r(Lv)\}=\min_{v}\max_{w}\{f(v)+\inner{L^{\ast}v,w}-r^{\ast}(w)\}.
\]
A saddle point is characterized by the monotone inclusion 
\begin{align*}
0\in \nabla f(\bar{v})+L\bar{w} \\
0\in -L^{\ast}\bar{v}+\partial r^{\ast}(\bar{w})
\end{align*}
which can be compactly written as the problem of finding a point in $\zer(\opA+\opF)$, with $\opF(v,w)=[Lw;-L^{\ast}v]+[\nabla f(v);0]$ and $\opA(v,w)=\{0\}\times\partial r^{\ast}(w)$. Note that the operator $\opF$ is not cocoercive, which precludes a direct approach via viscosity techniques as the celebrated BiG-SAM \cite{SabSht17}. 
\end{example}

\begin{example}[Equilibrium Selection in Nash games]
The Nash equilibrium problem is described by a finite set of players $i\in\{1,\ldots,N\}$, each characterized by a strategy space $\scrX_{i}\subseteq\scrH_{i}$, and a real-valued function $f_{i}:\prod_{i=1}^{N}\scrX_{i}\to \R$. An $N$-tuple $x^{\ast}=(x_{1}^{\ast},\ldots,x_{N}^{\ast})\in\scrX\eqdef\prod_{i=1}^{N}\scrX_{i}$ is a Nash equilibrium if 
$$
f_{i}(x^{\ast})\leq f_{i}(x_{i},x^{\ast}_{-i})\qquad \forall x_{i}\in\scrX_{i}, i=1,2,\ldots,N.
$$
We define the game as a tuple $\Gamma=\{f_{i},\scrX_{i}\}_{1\leq i\leq N}$ and denote the set of Nash equilibrium points by $\NE(\Gamma)$. Assuming that each mapping $x_{i}\mapsto f_{i}(x_{i},x_{-i})$ is convex and differentiable, it is a classical result that the set of Nash equilibria can be characterized as the solution set of a Variational inequality $\VI(\opF,\scrX)$, determined by the operator 
$$
\opF:\scrH\to\scrH,x\mapsto \opF(x)=\begin{pmatrix} \nabla_{x_{1}}f_{1}(x) \\ \vdots \\ \nabla_{x_{N}}f_{N}(x)\end{pmatrix}.
$$
Setting $\opA=\NC_{\scrX}$, the Normal cone operator (see section \ref{prelim}), we can reformulate problem $\VI(\opF,\scrX)$ as the inclusion problem $\zer(\opA+\opF)$. In many design problems, one asks for the "best" equilibrium point, relative to a pre-defined loss function $g:\scrH\to\R$ with gradient mapping $\opG=\nabla g$. This leads to the equilibrium selection problem 
$$
\text{Solve }\VI(\opG,\scrS_{0}) \quad\scrS_{0}=\zer(\opA+\opF).
$$
\end{example}

\subsection{Contributions}

\noindent In order to resolve the hierarchical VI problem \eqref{eq:P}, we follow \cite{Facchinei:2014aa} and adopt a regularization framework involving a parametric family of structured monotone inclusions $\opA+\Phi_{\alpha,\beta}(\bullet,w)$, where $\Phi_{\alpha,\beta}:\scrH\times\scrH\to\scrH$ is defined by  
\begin{equation}\label{eq:Phi}
\Phi_{\alpha,\beta}(v,w):=\opF(v)+\beta\opG(v)+\alpha(v-w).
\end{equation}
The parameter $\alpha>0$ is a proximal parameter, while $\beta>0$ is a Tikhonov parameter that regulates the relative importance of the lower level versus the upper level. Auxiliary mappings of the form \eqref{eq:Phi} have a long tradition in mathematical optimization in connection with the proximal penalization framework \cite{Alart:1991aa,Bahraoui:1994aa,Cominetti:1997aa}. To motivate this construction, let us investigate the structure of the operator $\Phi_{\alpha,\beta}$ when applied to the simple bilevel optimization problem (Example \ref{ex:SBP}). To iteratively solve this problem, suppose that $w$ is our best-so-far solution candidate. To obtain a new proposal, a common strategy within the proximal penalization framework is to solve the following strongly convex auxiliary problem 
\[
\min_{v\in\scrH}\{H_{\alpha,\beta}(v,w)\eqdef r(v)+f(v)+\beta g(v)+\frac{\alpha}{2}\norm{v-w}^{2}\}.
\]
Thanks to the proximal term, this problem admits a unique solution $\bar{u}_{\alpha,\beta}(w)$. Under a constraint qualification, we can compute 
$\partial_{v} H_{\alpha,\beta}(v,w)=\partial r(v)+\nabla f(v)+\beta\nabla g(v)+\alpha (v-w)=\opA(v)+\Phi_{\alpha,\beta}(v,w).$ 

Given an anchor point $w\in\scrH$, the auxiliary problem $\zer(\opA+\Phi_{\alpha,\beta}(\bullet,w))$ admits a unique solution $\bar{u}_{\alpha,\beta}(w)$. Our algorithmic framework builds upon a double loop architecture, in which the inner loop is an iterative method that drives the process to a neighborhood of the temporary solution $\bar{u}_{\alpha,\beta}(w)$. Upon termination of the inner loop, an outer loop is activated which updates the anchor point $w$ and the Tikhonov parameter $\beta$. The combination of Tikhonov and proximal regularization terms is borrowed from the paper \cite{Facchinei:2014aa}. This paper develops a \emph{general template} to design iterative methods for resolving problem \eqref{eq:P}. To keep the exposition concrete, we illustrate our machinery only in the special case in which the forward-backward splitting method is used as the main algorithmic map. The general abstract setting will be described in a forthcoming publication.  

With respect to \cite{Facchinei:2014aa}, the innovations of this note are the following: We consider a more general class of splitting problems, with explicit iterations via a resolvent step. Second, our scheme is robust to inexact computations and is formulated in Hilbert spaces, as required for potentially infinite-dimensional control applications. Our inner loop scheme is a relaxed-inertial forward backward implementation involving the regularized monotone operators that we employ in order to iteratively track the temporal solutions. This algorithmic technique is a classical tool in monotone splitting approaches. In fact, the inertial-relaxation version has been first studied in a discrete-time algorithm in \cite{Lorenz:2015aa}. However, their analysis assumes that the discrete velocity of the process is summable. This is a rather frequently encountered technical assumption, which, however, can never be checked before the algorithm is actually executed. Our proof does not make such an assumption, and instead uses a new Lyapunov-type analysis, inspired by \cite{Cortild:2024aa}. Using this technique, we establish linear convergence of the inner loop scheme. Additionally, we develop convergence guarantees in an inexact computational framework. Inexact forward-backward algorithms are studied in \cite{villa2013accelerated} and \cite{Apidopoulos:2020aa}.

Hybrid models for approaching problem \eqref{eq:P} have been studied in \cite{Moudafi:2006aa,Mainge:2007aa,Marino:2011aa}. These methods require $\opF$  to be co-coercive (inverse strongly monotone), a condition that substantially holds when the lower-level VI reduces to a convex optimization problem. On the contrary, here we only require $\opF$ to be monotone and Lipschitz continuous, allowing for the treatment of more general problems with respect to these classical approaches.


\section{Preliminaries}\label{prelim}

\noindent Let $\scrH$ be a real Hilbert space with inner product $\inner{\cdot,\cdot}$ and corresponding norm $\norm{\cdot}$. Given a closed convex set $\scrK\subseteq\scrH$, we define the orthogonal projection operator $\Pi_{\scrK}(x)=\argmin\{\frac{1}{2}\norm{w-x}^{2}\vert w\in\scrK\}$.  A set-valued operator $\opA:\scrH\to 2^{\scrH}$ is monotone if 
\[
\inner{u-v,b-a}\geq 0\quad\forall (u,b)\in\gr(\opA),\forall (v,a)\in\gr(\opA), 
\]
where $\gr(\opA)\eqdef\{(v,a)\in\scrH\times\scrH\vert a\in\opA(v)\}$ is the graph of the operator $\opA$. The operator $\opA:\scrH\to 2^{\scrH}$ is maximally monotone if it is monotone and there exists no other monotone operator whose graph contains $\gr(\opA)$. The resolvent of an operator $\opA$ is defined as $\resolvent_{\opA}=(\Id+\opA)^{-1}$. If $\opA$ is maximally monotone, then the resolvent is a nonexpansive operator (and thus single-valued) \cite{BauCom16}. 
\begin{lemma}\label{lem:graphMM}
Let $\opA$ be a maximal monotone operator. A point $(x,a)\in\scrH\times\scrH$ belongs to $\gr(\opA)$ if and only if 
\[
\inner{a-b,x-y}\geq 0\qquad\forall (y,b)\in\gr(\opA).
\]
\end{lemma}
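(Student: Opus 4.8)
The plan is to prove the two implications separately, with the forward direction being immediate from monotonicity and the reverse direction exploiting maximality via a ``no strictly larger monotone extension'' argument.

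First I would handle the \emph{only if} direction. If $(x,a)\in\gr(\opA)$, then for any $(y,b)\in\gr(\opA)$ the pair $(x,a),(y,b)$ lies in the graph of a monotone operator, so by definition $\inner{a-b,x-y}\geq 0$. This is nothing but the monotonicity inequality rewritten, so there is nothing more to do here.

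For the \emph{if} direction I would argue by contradiction. Suppose $(x,a)\in\scrH\times\scrH$ satisfies $\inner{a-b,x-y}\geq 0$ for all $(y,b)\in\gr(\opA)$, but $(x,a)\notin\gr(\opA)$. Consider the set $\scrG\eqdef\gr(\opA)\cup\{(x,a)\}$, which strictly contains $\gr(\opA)$. I would verify that $\scrG$ is still a monotone set: given two elements of $\scrG$, either both lie in $\gr(\opA)$ (and the inequality holds because $\opA$ is monotone), or one of them is $(x,a)$ and the other is some $(y,b)\in\gr(\opA)$ (and the inequality holds by the standing hypothesis, using symmetry of the bilinear expression in swapping the two pairs), or both equal $(x,a)$ (in which case $\inner{a-a,x-x}=0\geq 0$). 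Hence $\scrG$ is the graph of a monotone operator properly extending $\opA$, contradicting maximal monotonicity of $\opA$. Therefore $(x,a)\in\gr(\opA)$.

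I do not expect a genuine obstacle here, as this is a classical characterization; the only point requiring a little care is the bookkeeping in showing that adjoining the single pair $(x,a)$ preserves monotonicity of the graph, and in phrasing the contradiction precisely in terms of the definition of maximality (no monotone operator whose graph properly contains $\gr(\opA)$). One should also note in passing that a ``monotone operator'' is by definition one whose graph is a monotone subset of $\scrH\times\scrH$, so that producing the monotone set $\scrG$ is the same as producing a monotone operator extending $\opA$.
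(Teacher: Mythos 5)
Your proof is correct. The paper does not supply its own proof of this lemma (it is stated as a known fact, essentially Minty's characterization as found in Bauschke--Combettes), but the argument you give is precisely the canonical one the statement implicitly relies on: monotonicity gives the forward direction, and for the converse one adjoins the pair $(x,a)$ to $\gr(\opA)$, checks that the enlarged set is still monotone (the only nontrivial case being a pair consisting of $(x,a)$ and some $(y,b)\in\gr(\opA)$, which is exactly the standing hypothesis), and invokes the definition of maximality to conclude that no proper monotone extension exists. No gaps.
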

This Lemma shows also that for all $u\in\dom(\opA)\eqdef\{u\in\scrH\vert \opA(u)\neq\emptyset\}$, the set $\opA(u)$ is closed and convex. 

Given a mapping $\opG:\scrH\to\scrH$ and a set $\scrK\subseteq\scrH$, the variational inequality problem $\VI(\opG,\scrK)$, is the problem of finding a point $u\in\scrK$ such that 
$
\inner{\opG(u),v-u}\geq0\;\;\forall v\in\scrK. 
$
The solution set of $\VI(\opG,\scrK)$ can be expressed as $\zer(\opG+\NC_{\scrK})$, involving the normal cone operator 
\[
\NC_{\scrK}(v):=\left\{\begin{array}{ll} 
\emptyset & \text{if }v\notin\scrK\\
\{p\in\scrH\vert \inner{p,u-v}\leq 0,\;\forall u\in\scrK\} & \text{ if }v\in\scrK.
\end{array}\right.
\]
If $\opG:\scrH\to\scrH$ is continuous and monotone, and $\scrK\subseteq\scrH$ is closed and convex, then $\zer(\opG+\NC_{\scrK})$ is convex.
\begin{lemma}\label{lem:solution}
Let $\opG:\scrH\to\scrH$ be a monotone operator and $\scrK\subseteq\scrH$ convex. Let $u^{\ast}\in\zer(\opG+\NC_{\scrK})$. If $v\in\scrK$ satisfies
$
\inner{\opG(v),u^{\ast}-v}\geq 0,
$
 then $v\in\zer(\opG+\NC_{\scrK})$.
\end{lemma}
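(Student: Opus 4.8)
To prove the statement I would recast both memberships as Stampacchia variational inequalities: $u^{\ast}\in\zer(\opG+\NC_{\scrK})$ is equivalent to $\inner{\opG(u^{\ast}),u-u^{\ast}}\geq 0$ for all $u\in\scrK$, and the desired conclusion $v\in\zer(\opG+\NC_{\scrK})$ is equivalent to $\inner{\opG(v),u-v}\geq 0$ for all $u\in\scrK$. So the whole proof reduces to establishing this last family of inequalities for the point $v$.

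The preliminary observation is that the hypothesis forces the monotonicity inequality between $v$ and $u^{\ast}$ to be tight. Testing the variational inequality for $u^{\ast}$ at $v\in\scrK$ gives $\inner{\opG(u^{\ast}),v-u^{\ast}}\geq 0$; adding the monotonicity inequality $\inner{\opG(v)-\opG(u^{\ast}),v-u^{\ast}}\geq 0$ gives $\inner{\opG(v),v-u^{\ast}}\geq 0$, i.e.\ $\inner{\opG(v),u^{\ast}-v}\leq 0$. Combined with the standing assumption $\inner{\opG(v),u^{\ast}-v}\geq 0$, this yields $\inner{\opG(v),v-u^{\ast}}=0$, and then, substituting back into the two inequalities just used, also $\inner{\opG(u^{\ast}),v-u^{\ast}}=0$; in particular $\inner{\opG(v)-\opG(u^{\ast}),v-u^{\ast}}=0$.

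The heart of the argument is to upgrade this scalar identity to the vector identity $\opG(v)=\opG(u^{\ast})$, and this is the step I expect to be the real obstacle: plain monotonicity of $\opG$ does not suffice here (a $90^{\circ}$ rotation on $\R^{2}$ with $\scrK=\R^{2}$ already violates the assertion), so one must use additional structure — for instance paramonotonicity, which holds whenever $\opG=\nabla g$ with $g$ convex, as is the case for $\opG$ in all the examples above. In that gradient case the step is transparent: the equality $\inner{\nabla g(v)-\nabla g(u^{\ast}),v-u^{\ast}}=0$ says that $g$ is affine along the segment $[u^{\ast},v]$, hence $\nabla g(v)=\nabla g(u^{\ast})$; equivalently, convexity together with $\inner{\nabla g(v),u^{\ast}-v}\geq 0$ gives $g(u^{\ast})\geq g(v)$, while $u^{\ast}\in\argmin_{\scrK}g$ and $v\in\scrK$ give $g(v)\geq g(u^{\ast})$, so that $v$ already minimizes $g$ over $\scrK$, which is exactly the conclusion. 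Once $\opG(v)=\opG(u^{\ast})$ is in hand, the proof closes in one line: for arbitrary $u\in\scrK$,
\[
\inner{\opG(v),u-v}=\inner{\opG(u^{\ast}),u-u^{\ast}}+\inner{\opG(u^{\ast}),u^{\ast}-v}=\inner{\opG(u^{\ast}),u-u^{\ast}}\geq 0 ,
\]
where the first equality uses $\opG(v)=\opG(u^{\ast})$ together with $u-v=(u-u^{\ast})+(u^{\ast}-v)$, the second uses $\inner{\opG(u^{\ast}),v-u^{\ast}}=0$ from the previous paragraph, and the inequality is the variational inequality for $u^{\ast}$. Since $u\in\scrK$ was arbitrary and $v\in\scrK$, this says $-\opG(v)\in\NC_{\scrK}(v)$, i.e.\ $v\in\zer(\opG+\NC_{\scrK})$.
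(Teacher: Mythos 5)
Your diagnosis is the most important part of your submission, and it is right: the lemma is \emph{false} as stated. The paper gives no proof of this lemma, and none can exist under the stated hypotheses. Your rotation counterexample works verbatim: take $\scrK=\scrH=\R^{2}$ and $\opG(x_{1},x_{2})=(-x_{2},x_{1})$, which is linear, Lipschitz and monotone (skew-symmetric); then $\zer(\opG+\NC_{\scrK})=\zer(\opG)=\{0\}=\{u^{\ast}\}$, yet every $v$ satisfies $\inner{\opG(v),u^{\ast}-v}=-\inner{\opG(v),v}=0\geq 0$ without being a zero of $\opG$. Your preliminary step --- deducing from monotonicity alone that $\inner{\opG(v),v-u^{\ast}}=\inner{\opG(u^{\ast}),v-u^{\ast}}=\inner{\opG(v)-\opG(u^{\ast}),v-u^{\ast}}=0$ --- is exactly the classical ``cutting-plane'' argument for paramonotone operators (Iusem), and you correctly locate the only place where extra structure is needed: passing from the scalar identity to $\opG(v)=\opG(u^{\ast})$ (or directly to the conclusion) requires paramonotonicity, which holds automatically when $\opG=\nabla g$ for convex $g$, where the cleanest route is $g(u^{\ast})\geq g(v)\geq g(u^{\ast})$. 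Under that added hypothesis your closing computation is correct and complete.

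The consequence worth flagging to the authors: Assumption~1 of the paper only requires $\opG$ to be monotone, Lipschitz and weakly sequentially continuous, and the outer-loop convergence theorem invokes Lemma~\ref{lem:solution} with precisely this $\opG$ to conclude that weak limit points of $(w^{t})$ lie in $\scrS_{1}$. So the gap you identified is not confined to the lemma; it propagates to the main convergence result unless paramonotonicity of $\opG$ (or the potential structure $\opG=\nabla g$, which covers the bilevel-optimization examples and the numerical experiment) is added to the standing assumptions.
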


\begin{lemma}\label{lem1}
For all $u,v\in\scrH$ and $\xi\in\R$ it holds that 
$$
\norm{u+\xi v}^2=(1+\xi)\norm{u}^2+\xi(1+\xi)\norm{v}^2-\xi\norm{u-v}.
$$
\end{lemma}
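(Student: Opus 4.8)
The plan is to prove this by direct expansion of the squared norms via the inner product; it is an elementary algebraic identity and needs no structural results from the preliminaries. First I would use bilinearity and symmetry of $\inner{\cdot,\cdot}$ to write $\norm{u+\xi v}^2=\norm{u}^2+2\xi\inner{u,v}+\xi^2\norm{v}^2$. Next I would expand the cross term appearing on the right-hand side, $\xi\norm{u-v}^2=\xi\norm{u}^2-2\xi\inner{u,v}+\xi\norm{v}^2$, and substitute it into $(1+\xi)\norm{u}^2+\xi(1+\xi)\norm{v}^2-\xi\norm{u-v}^2$. The $\xi\norm{u}^2$ contributions cancel, the $\norm{v}^2$ coefficient collapses as $\xi(1+\xi)-\xi=\xi^2$, and the term $+2\xi\inner{u,v}$ survives, leaving exactly $\norm{u}^2+\xi^2\norm{v}^2+2\xi\inner{u,v}$, which equals the left-hand side. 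This works for every $\xi\in\R$ with no case distinction.

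An equivalent route, if one prefers to avoid writing out inner products, is to recognize the claim as a rescaled instance of the standard convex-combination (three-point) identity $\norm{(1-t)a+tb}^2=(1-t)\norm{a}^2+t\norm{b}^2-t(1-t)\norm{a-b}^2$: for $\xi\neq-1$ set $a=u$, $b=u+v$, $t=\xi/(1+\xi)$, so $(1-t)a+tb=u+\xi v/(1+\xi)$, multiply through by $(1+\xi)$, and use $a-b=-v$; the remaining value $\xi=-1$ is checked by inspection. I would, however, present the brute-force expansion in the paper since it is shorter and uniform in $\xi$.

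I do not anticipate any real obstacle here; the only point requiring care is the sign bookkeeping and the coefficient $\xi(1+\xi)$ in front of $\norm{v}^2$, together with noticing that the cross term $2\xi\inner{u,v}$ is produced with the correct sign from $-\xi\norm{u-v}^2$. (I note in passing that the statement as typeset is missing the square on the last summand; the identity that the expansion verifies, and the one that is dimensionally consistent with the left-hand side, is $\norm{u+\xi v}^2=(1+\xi)\norm{u}^2+\xi(1+\xi)\norm{v}^2-\xi\norm{u-v}^2$.)
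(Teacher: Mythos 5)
Your expansion is correct and is exactly the routine computation the paper leaves implicit (the lemma is stated without proof, as it is a standard identity). You are also right that the displayed statement contains a typo: the last term must read $-\xi\norm{u-v}^{2}$, which is the form actually used later in the convergence analysis, and your verification establishes precisely that corrected identity for all $\xi\in\R$.
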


\begin{lemma}\label{lem2}
For all $u,v\in\scrH$ and $t>0$ holds
\begin{multline*}
(1-t)\norm{u}^2+(1-\frac{1}{t})\norm{v}^2\\\leq\norm{u\pm v}^2\leq(1+t)\norm{u}^2+(1+\frac{1}{t})\norm{v}^2.
\end{multline*}
\end{lemma}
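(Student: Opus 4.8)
The plan is to reduce the claim to the elementary Young inequality $2ab\le t a^2+t^{-1}b^2$, valid for all real $a,b$ and every $t>0$, which itself is just a rearrangement of $0\le(\sqrt t\,a-b/\sqrt t)^2$. The starting point is the polarization identity $\norm{u\pm v}^2=\norm{u}^2+\norm{v}^2\pm 2\inner{u,v}$, so the whole lemma amounts to bounding the cross term $\pm 2\inner{u,v}$ from above and from below in terms of $\norm{u}^2$ and $\norm{v}^2$.

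For the upper estimate I would apply Cauchy--Schwarz, $\pm 2\inner{u,v}\le 2\norm{u}\,\norm{v}$, followed by Young's inequality with the same parameter $t$ as in the statement, giving $2\norm{u}\,\norm{v}\le t\norm{u}^2+t^{-1}\norm{v}^2$; adding $\norm{u}^2+\norm{v}^2$ to both sides produces the right-hand inequality $\norm{u\pm v}^2\le(1+t)\norm{u}^2+(1+\tfrac1t)\norm{v}^2$. For the lower estimate, the identical argument applied to $-(\pm 2\inner{u,v})$ yields $\pm 2\inner{u,v}\ge -t\norm{u}^2-t^{-1}\norm{v}^2$, and adding $\norm{u}^2+\norm{v}^2$ gives the left-hand inequality. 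Since the role of the sign is symmetric, both choices in $u\pm v$ are covered simultaneously, and no case distinction is needed beyond the single line $\pm 2\inner{u,v}=\pm 2\inner{u,v}$.

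There is essentially no obstacle: the only point requiring a moment's care is that the parameter $t$ used in Young's inequality must be taken to be exactly the $t$ in the statement (no rescaling), and that the hypothesis $t>0$ is precisely what makes $\sqrt t$ meaningful and the quadratic $0\le(\sqrt t\,a-b/\sqrt t)^2$ valid. As an aside, the whole two-sided bound could also be extracted from Lemma~\ref{lem1} by choosing $\xi$ appropriately and discarding the nonnegative remainder term, but the direct Young-inequality derivation above is the shortest and is what I would write.
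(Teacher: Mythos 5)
Your proof is correct: the polarization identity combined with Cauchy--Schwarz and Young's inequality $2ab\le ta^2+t^{-1}b^2$ is the standard argument for this two-sided bound, and the paper itself states Lemma~\ref{lem2} without proof, so this is exactly the intended derivation. No gaps.
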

The next important result is implicit in Theorem 2.1 of \cite{Alvarez:2001aa}. 

\begin{lemma}\label{lem3}
Let $(a_k)_{k\in\N}$ and $(b_k)_{k\in\N}$ be nonnegative sequences such that $a_k\leq a<1$ for all $k\geq 0$, and $\sum_{k=0}^{\infty}b_k<+\infty$. Consider a real sequence $(C_k)_{k\in\N}$ such that 
$$
C_{k+1}\leq a_kC_k+b_k
$$
for all $k\geq 0$. Then $\sum_{k=1}^{\infty}[C_k]_+$ is convergent. If $b_k\equiv 0$, then $[C_k]_+\leq a^{k-1}[C_1]_+$. Either way, if $C_k\geq w_k-dw_{k-1}$ for all $k\geq 0$ with $d\in[0,1]$ and $(w_k)_{k\in\N}$ nonnegative, then $(w_k)$ is convergent. If $d<1$, then $\sum_{k=0}^{\infty}w_k<+\infty$.
\end{lemma}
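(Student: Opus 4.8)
The plan is to pass to the nonnegative sequence $d_k\eqdef[C_k]_+$ and to observe that the recursion is stable under taking positive parts. Since $a_k\ge 0$ and $b_k\ge 0$, a short two-case check ($C_k\le 0$ versus $C_k>0$) shows $[a_kC_k+b_k]_+\le a_k[C_k]_+ + b_k$, and hence $d_{k+1}\le a_kd_k+b_k\le a\,d_k+b_k$ for all $k\ge 0$. For the first assertion I would sum this over $k=1,\dots,N$: with $S_N\eqdef\sum_{k=1}^{N}d_k$ and $B\eqdef\sum_{k\ge 0}b_k<\infty$ one gets $S_{N+1}-d_1\le a\,S_N+B\le a\,S_{N+1}+B$, hence $(1-a)\,S_{N+1}\le d_1+B$. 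As $(S_N)$ is nondecreasing and bounded above by $(d_1+B)/(1-a)$, it converges, i.e.\ $\sum_{k\ge 1}[C_k]_+<\infty$. The second assertion is the special case $b_k\equiv 0$, where the inequality reduces to $d_{k+1}\le a\,d_k$ and a one-line induction from $k=1$ yields $[C_k]_+=d_k\le a^{k-1}d_1=a^{k-1}[C_1]_+$.

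For the last part, combine $C_k\le d_k$ with the hypothesis $w_k-d\,w_{k-1}\le C_k$ to obtain the master inequality $0\le w_k\le d\,w_{k-1}+d_k$, in which $\sum_k d_k<\infty$ by the first part. If $d<1$, iterate this bound from $w_0$ and sum the resulting geometric series; this gives $\sum_{k\ge0} w_k<\infty$, so in particular $w_k\to 0$, which proves both the convergence of $(w_k)$ and the final claim. If $d=1$, set $u_k\eqdef w_k-\sum_{j=1}^{k}d_j$; the master inequality makes $(u_k)$ nonincreasing, while $u_k\ge -\sum_{j\ge 1}d_j>-\infty$ since $w_k\ge 0$, so $(u_k)$ converges, and therefore so does $w_k=u_k+\sum_{j=1}^{k}d_j$.

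The argument is entirely elementary; the only step worth isolating is the first one, namely that positive parts propagate through the recursion with the same multipliers. This is exactly what allows one to discard the uncontrolled negative excursions of $C_k$ without losing the quantitative decay rate; everything afterwards is the summation of a telescoping or a geometric series.
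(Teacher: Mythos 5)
Your proof is correct and complete. The paper itself gives no proof of this lemma (it only cites it as implicit in Theorem~2.1 of Alvarez--Attouch), and your argument --- propagating positive parts through the recursion, telescoping to get summability of $[C_k]_+$, and then either summing a geometric series ($d<1$) or passing to the nonincreasing auxiliary sequence $w_k-\sum_{j\le k}[C_j]_+$ ($d=1$) --- is exactly the standard route. The only cosmetic issues are the clash between your symbol $d_k=[C_k]_+$ and the constant $d$ from the statement, and the fact that the hypothesis ``$C_k\geq w_k-dw_{k-1}$ for all $k\geq 0$'' can only be meant for $k\geq 1$; neither affects correctness.
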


In the next results, we gather some basic properties of the operator $\Phi_{\alpha,\beta}(\cdot,w):\scrH\to\scrH$. 

\begin{lemma}\label{lem:Properties-Phi}
For all $(\alpha,\beta)\in\R^{2}_{+}$ the mapping $\Phi_{\alpha,\beta}(\cdot,w)$ is $\alpha$-strongly monotone and $\Lip_{\alpha,\beta}=\Lip_{\opF}+\beta \Lip_{\opG}+\alpha$ - Lipschitz continuous.
\end{lemma}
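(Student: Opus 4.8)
The plan is to verify the two claimed properties of $\Phi_{\alpha,\beta}(\cdot,w)$ directly from the definition \eqref{eq:Phi}, namely $\Phi_{\alpha,\beta}(v,w)=\opF(v)+\beta\opG(v)+\alpha(v-w)$, using only the hypotheses collected in Assumption~\ref{ass:1} on $\opF$ and $\opG$. Both claims reduce to the observation that $\Phi_{\alpha,\beta}(\cdot,w)$ is a linear combination (with nonnegative coefficients, plus an affine shift by the constant $-\alpha w$) of operators whose monotonicity and Lipschitz moduli are known, together with the elementary fact that the identity map $v\mapsto v$ is $1$-strongly monotone and $1$-Lipschitz.

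For the strong monotonicity, I would take two points $v_1,v_2\in\scrH$ and expand
\[
\inner{\Phi_{\alpha,\beta}(v_1,w)-\Phi_{\alpha,\beta}(v_2,w),\,v_1-v_2}
=\inner{\opF(v_1)-\opF(v_2),v_1-v_2}+\beta\inner{\opG(v_1)-\opG(v_2),v_1-v_2}+\alpha\norm{v_1-v_2}^{2}.
\]
By monotonicity of $\opF$ and $\opG$ (parts 1 and 2 of Assumption~\ref{ass:1}) the first two terms are nonnegative, and since $\beta\geq 0$ the whole expression is at least $\alpha\norm{v_1-v_2}^{2}$, which is exactly $\alpha$-strong monotonicity. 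For the Lipschitz estimate, apply the triangle inequality to
\[
\norm{\Phi_{\alpha,\beta}(v_1,w)-\Phi_{\alpha,\beta}(v_2,w)}\leq\norm{\opF(v_1)-\opF(v_2)}+\beta\norm{\opG(v_1)-\opG(v_2)}+\alpha\norm{v_1-v_2},
\]
and bound each summand using the Lipschitz constants $\Lip_{\opF}$ and $\Lip_{\opG}$ and the triviality $\norm{(v_1-w)-(v_2-w)}=\norm{v_1-v_2}$; this yields the constant $\Lip_{\alpha,\beta}=\Lip_{\opF}+\beta\Lip_{\opG}+\alpha$.

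There is essentially no obstacle here: the statement is a bookkeeping lemma whose only content is keeping track of how the regularization parameters $\alpha$ and $\beta$ enter the moduli, and the proof is a two-line computation in each case. The one point worth a sentence of care is that the affine anchor term $-\alpha w$ is constant in $v$, hence drops out of both the difference $\Phi_{\alpha,\beta}(v_1,w)-\Phi_{\alpha,\beta}(v_2,w)$ used for monotonicity and the difference used for the Lipschitz bound, so it affects neither modulus; this is why the lemma holds uniformly in the anchor $w\in\scrH$.
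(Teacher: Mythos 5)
Your proof is correct; the paper states this lemma without proof, and your argument (expanding the inner product and using monotonicity of $\opF$ and $\opG$ plus the $\alpha\norm{v_1-v_2}^2$ contribution of the identity term for strong monotonicity, then the triangle inequality for the Lipschitz bound) is exactly the standard computation the authors implicitly rely on. Your remark that the constant anchor term $-\alpha w$ cancels in both differences, so the moduli are uniform in $w$, is also the right point to flag.
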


\noindent Following the classical literature on proximal diagonal schemes, we consider the sequence of auxiliary problems $\zer(\opA+\Phi_{\alpha,\beta}(\cdot,w))$. Denote by $\bar{u}_{\alpha,\beta}(w)$ the unique solution of the monotone inclusion problem $\zer(\opA+\Phi_{\alpha,\beta})$. Associated with this monotone inclusion problem, we have the equivalent fixed point reformulation using the forward-backward operator. Specifically, for $\gamma>0$, we define the merit function
\begin{equation}
\scrG^{\gamma}_{\alpha,\beta}(v,w):=\frac{1}{\gamma}[v-\resolvent_{\gamma\opA}(v-\gamma\Phi_{\alpha,\beta}(v,w))].
\end{equation}

This function has the property that 
$$
\scrG^{\gamma}_{\alpha,\beta}(v,w)=0\iff v=\bar{u}_{\alpha,\beta}(w).
$$
Hence, we make use of the norm $\norm{\scrG^{\gamma}_{\alpha,\beta}(v,w)}$ to measure the accuracy of the point $v$ with respect to the unique element of $\zer(\opA+\Phi_{\alpha,\beta}(\cdot,w))$, a strategy that achieves its formal justification via the next result. 

\begin{lemma}\label{lem:gradient}
For all $\alpha,\beta>0$ and $(v,w)\in\scrH\times\scrH$, we have 
\begin{equation}
\norm{v-\bar{u}_{\alpha,\beta}(w)}\leq\frac{1+\gamma\Lip_{\alpha,\beta}}{\alpha}\norm{\scrG^{\gamma}_{\alpha,\beta}(v,w)}.
\end{equation}
\end{lemma}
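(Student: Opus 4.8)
The plan is to exploit the two monotone inclusions that identify, respectively, the forward–backward iterate and the exact solution $\bar u_{\alpha,\beta}(w)$, to combine them via monotonicity of $\opA$, and then to squeeze out the bound using the $\alpha$-strong monotonicity and $\Lip_{\alpha,\beta}$-Lipschitz continuity of $\Phi_{\alpha,\beta}(\cdot,w)$ recorded in Lemma~\ref{lem:Properties-Phi}. Fix $(v,w)\in\scrH\times\scrH$ and abbreviate $\bar u:=\bar u_{\alpha,\beta}(w)$, $\Phi:=\Phi_{\alpha,\beta}(\cdot,w)$, $\varrho:=\scrG^{\gamma}_{\alpha,\beta}(v,w)$, and $z:=\resolvent_{\gamma\opA}(v-\gamma\Phi(v))$, so that by definition of the merit function $v-z=\gamma\varrho$, i.e. $z=v-\gamma\varrho$.

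First I would record the resolvent characterization: $z=\resolvent_{\gamma\opA}(v-\gamma\Phi(v))$ is equivalent to $v-\gamma\Phi(v)-z\in\gamma\opA(z)$, that is $\varrho-\Phi(v)\in\opA(z)$. On the other hand, $\bar u\in\zer(\opA+\Phi)$ means $-\Phi(\bar u)\in\opA(\bar u)$. Applying monotonicity of $\opA$ to the pairs $(z,\varrho-\Phi(v))$ and $(\bar u,-\Phi(\bar u))$ gives $\inner{\varrho-\Phi(v)+\Phi(\bar u),z-\bar u}\ge 0$. I then substitute $z-\bar u=(v-\bar u)-\gamma\varrho$, expand the inner product, and move the term $\inner{\Phi(v)-\Phi(\bar u),v-\bar u}$ to the left, arriving at
\[
\inner{\Phi(v)-\Phi(\bar u),v-\bar u}\le\inner{\varrho,v-\bar u}-\gamma\norm{\varrho}^{2}+\gamma\inner{\Phi(v)-\Phi(\bar u),\varrho}.
\]
Next I would discard the nonpositive term $-\gamma\norm{\varrho}^{2}$, bound $\inner{\varrho,v-\bar u}\le\norm{\varrho}\,\norm{v-\bar u}$ and $\gamma\inner{\Phi(v)-\Phi(\bar u),\varrho}\le\gamma\Lip_{\alpha,\beta}\,\norm{v-\bar u}\,\norm{\varrho}$ via Cauchy--Schwarz and the Lipschitz estimate of Lemma~\ref{lem:Properties-Phi}, and lower bound the left-hand side by $\alpha\norm{v-\bar u}^{2}$ using $\alpha$-strong monotonicity. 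This yields $\alpha\norm{v-\bar u}^{2}\le(1+\gamma\Lip_{\alpha,\beta})\,\norm{v-\bar u}\,\norm{\varrho}$; dividing by $\norm{v-\bar u}$ (the case $v=\bar u$ being trivial, since then $\varrho=0$) gives the claimed inequality.

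The computation is largely mechanical; the one point requiring attention is that monotonicity of $\opA$ has to be invoked at the \emph{resolvent iterate} $z$, not at $v$, after which every quantity must be re-expressed in terms of $v-\bar u$ and $\varrho=\scrG^{\gamma}_{\alpha,\beta}(v,w)$ through the identity $z=v-\gamma\varrho$. The only place where one might lose the sharp constant is in the cross term $\gamma\inner{\Phi(v)-\Phi(\bar u),\varrho}$: it should be carried along intact and only bounded by Lipschitz continuity at the very end, which is precisely what produces the factor $1+\gamma\Lip_{\alpha,\beta}$ in the statement rather than a coarser one.
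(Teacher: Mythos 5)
Your proof is correct and follows essentially the same route as the paper: apply monotonicity of $\opA$ at the resolvent point $z=v-\gamma\varrho$ and at $\bar u$, expand, then combine $\alpha$-strong monotonicity with Lipschitz continuity of $\Phi_{\alpha,\beta}(\cdot,w)$. As a minor bonus, you carry the factor $\norm{\varrho}$ correctly through the cross term $\gamma\inner{\Phi(v)-\Phi(\bar u),\varrho}\le\gamma\Lip_{\alpha,\beta}\norm{v-\bar u}\,\norm{\varrho}$ (the paper's displayed chain drops this factor in its last two lines, an evident typo) and you explicitly dispose of the degenerate case $v=\bar u$ before dividing.
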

\begin{proof}
Call $g=\scrG^{\gamma}_{\alpha,\beta}(v,w)$, so that 
$$
\opA(v-g\gamma)\ni g-\Phi_{\alpha,\beta}(v,w).
$$
Additionally, $\bar{u}\equiv\bar{u}_{\alpha,\beta}(w)$ satisfies 
$$
-\Phi_{\alpha,\beta}(\bar{u},w)\in\opA(\bar{u}). 
$$
From the monotonicity of $\opA$, it follows  
$$
\inner{v-g\gamma-\bar{u},g-\Phi_{\alpha,\beta}(v,w)+\Phi_{\alpha,\beta}(\bar{u},w)}\geq 0.
$$
Rearranging the above immediately yields the estimate 
\begin{align*}
&\inner{v-\bar{u},\Phi_{\alpha,\beta}(v,w)-\Phi_{\alpha,\beta}(\bar{u},w)}\\
&\leq -\gamma\norm{g}^{2}+\inner{v-\bar{u},g}+\gamma\inner{g,\Phi_{\alpha,\beta}(v,w)-\Phi_{\alpha,\beta}(\bar{u},w)}\\
&\leq \norm{v-\bar{u}}\cdot\norm{g}+\gamma\norm{g}\cdot\norm{\Phi_{\alpha,\beta}(v,w)-\Phi_{\alpha,\beta}(\bar{u},w)}\\
&\leq \norm{v-\bar{u}}\cdot\norm{g}+\gamma L_{\alpha,\beta}\norm{v-\bar{u}}.
\end{align*}
Finally, using the $\alpha$-strong monotonicity of the operator $\Phi_{\alpha,\beta}(\cdot,w)$, uniformly in $w$, we can lower bound the left-hand side expression by 
\[
\alpha\norm{v-\bar{u}}^{2}\leq  \norm{v-\bar{u}}\cdot\norm{g}+\gamma\Lip_{\alpha,\beta}\norm{v-\bar{u}}.
\]
We conclude $\norm{v-\bar{u}}\leq \frac{1+\gamma\Lip_{\alpha,\beta}}{\alpha}\norm{g}.$
 \end{proof}

\section{Algorithm}
\noindent As mentioned, our algorithmic design uses a double loop architecture, in which a forward-backward splitting method serves as an inner loop scheme to produce an iterate that is close to a temporal solution of the auxiliary problem $\zer(\opA+\Phi_{\alpha,\beta}(\cdot,w))$.  We denote by $t\in\N$ the outer loop iteration count, and the corresponding time $t$ approximate solution $w^{t}\in\scrH$ which is produced in such a way that guarantees 
\begin{equation}\label{eq:temporal}
\norm{w^{t+1}-\bar{u}_{\alpha,\beta_{t}}(w^{t})}\leq\ce_{t},
\end{equation}
where $(\ce_{t})_{t\in\N}$ is a positive sequence of accuracies that is dynamically adjusted when the outer loop is called upon.  

\subsection{The inner loop}
\noindent
The main iteration performed by our method is a forward backward step, involving a set of user-provided parameters that are updated over time. We call this scheme by $\FB(\beta,\eps,\gamma,w)$. The main computational step in this procedure is the recursive update 
\begin{equation}\label{eq:FB}
v^{k+1}=T_{\gamma_k}(v^{k})\equiv \resolvent_{\gamma_{k}\opA}(v^{k}-\gamma_{k}\Phi_{\alpha,\beta}(v^{k},w)),
\end{equation}
where $w\in\scrH$ is a given anchor point and $(\gamma_{k})_{k}$ is a positive sequence of step sizes. Given the pair $(\alpha,\beta)\in(0,\infty)^{2}$ and $w\in\scrH$, our method recursively constructs sequences $(z^{k})_{k}$ and $(v^{k})_{k}$ which iteratively approximate the unique solution $\bar{u}_{\alpha,\beta}(w)$ of the auxiliary problem $\zer(\opA+\Phi_{\alpha,\beta}(\cdot,w))$. In the updating step of the sequence $(v^{k})_{k}$ instead of using the exact forward-backward map $T_{\gamma_k}(v)$, we assume that we have only access to a $\delta_{k}$-perturbation $\tilde{T}_{k}(z^{k})$, in the sense that 
\begin{equation}\label{eq:delta}
\tilde{T}_{k}(z)=T_{\gamma_{k}}(z)+\delta_{k}.
\end{equation}
The inclusion of the numerical error $\delta_{k}$ is motivated by the fact that the resolvent is in general not available in exact form or its computation may be very demanding. Just to mention some examples, this happens when applying proximal methods to image deblurring with total variation \cite{Chambolle:2004aa}, or to structured sparsity regularization problems in machine learning and inverse problems \cite{Zhao:2009aa}. In those cases, the proximity operator is usually computed using ad hoc algorithms, and therefore inexactly. Employing this inexact computational model, we propose an inertial forward-backward algorithm for iteratively solving \eqref{eq:P} by updating the sequence $(v^{k},z^{k})_{k}$ as follows: 
\[
\left\{\begin{array}{l}
z^{k}=v^{k}+\tau_{k}(v^{k}-v^{k-1}),\\
v^{k+1}=(1-\theta_{k})z^{k}+\theta_{k}\tilde{T}_{k}(z^{k})
\end{array}
\right.
\]
$\tau_{k}\in(0,1)$ is a momentum term, while the parameter $\theta_{k}\in(0,1)$ is a relaxation factor for the inexact Krasnoselskii-Mann iteration. It is well known that if we choose $\gamma_k\in(0,2\alpha/\Lip^{2}_{\alpha,\beta_t})$ and setting $q_k(\alpha,\beta_t)=\sqrt{1-\gamma_k(2\alpha-\gamma\Lip^{2}_{\alpha,\beta_t})}\in(0,1)$, then the operator $T_{\gamma_k}$  is a $q_k(\alpha,\beta_t)$ contraction.

As a last ingredient in the lower level problem, we need an operational stopping criterion which informs us when the iterates are sufficiently close to the temporal solution $\bar{u}_{\alpha,\beta}(w)$. To achieve this aim, we define the stopping time 
\begin{equation}\label{eq:stopping}
\K_{\alpha,\beta}(\eps,w)\eqdef\inf\{k\geq 1\vert\;\norm{v^{k+1}-z^{k}}\leq\eps\} .
\end{equation}

\noindent We introduce the following assumptions on the parameters:
\begin{assumption}\label{ass:slowcontrol}
$(\beta_{t})_{t\in\N}\notin\ell^{1}_{+}(\N)$ and $\lim_{t\to\infty}\frac{\ce_{t}}{\beta_{t}}=0$. 
\end{assumption}
\begin{assumption}\label{ass:parameters}
$\theta_k\in[0,1]$ and there exists a $\bar{\theta},\underline{\theta}$ such that $\bar{\theta}\geq\theta_k\geq\underline{\theta}$ for all k and  $\tau_k\in[0,1]$ monotonically increasing such that there exists a $\bar{\tau}\geq\tau_k$ for all k.
\end{assumption}

\subsection{General Stopping Time}
\noindent Before tackling the outer loop, we need an operational stopping criterion
which informs us when the iterates are sufficiently close to the temporal solution $\bar{u}_{\alpha,\beta_t}$. Let there exists an $\epsilon$ such that $\norm{v^{k+1}-z^{k}}\leq\epsilon$, then
\begin{align*}
\norm{z^k-T_{\gamma_k}(z^k)}&=\norm{\frac{1}{\theta_k}(v^{k+1}-z^k)}\leq\frac{\epsilon}{\theta_k}.
\end{align*}
Moreover, let again $\{\bar{u}^k\}=\textnormal{Fix}(T_{\gamma_k})$, we get 
\begin{align*}
\norm{z^k-\bar{u}^k}&=\norm{z^k-T_{\gamma_k}(z^k)+T_{\gamma_k}(z^k)-\bar{u}^k}\\
&=\norm{\frac{v^{k+1}-z^k}{\theta_k}+T_{\gamma_k}(z^k)-\bar{u}^k}\\
&\leq\norm{\frac{v^{k+1}-z^k}{\theta_k}}+\norm{T_{\gamma_k}(z^k)-T_{\gamma_k}(\bar{u}^k)}\\
&\leq\frac{\epsilon}{\theta_k}+q_k(\alpha,\beta_t)\norm{z^k-\bar{u}^k}
\end{align*}
so that $\norm{z^k-\bar{u}^k}\leq\frac{\epsilon}{\theta_k(1-q_k(\alpha,\beta_t))}.$ This suggests to take as a sensible stopping criterion the stopping time
$$
\mathbb{K}(\epsilon)=\inf\{k\geq 1\mid \norm{v^{k+1}-z^k}\leq\epsilon\}.
$$

\subsection{The outer loop} 
\noindent In the outer loop we update the parameters $\eps_{t},\beta_{t},\gamma_{t},w^{t}$ in order to restart the inner loop of the method $IKM(\alpha,\beta,\eps,\gamma,w)$. We set $K_{t}\equiv \K_{\alpha,\beta_{t}}(\eps_{t},w^{t})$, and update the anchor point by setting
$w^{t+1}=v^{K_{t}}_{t}$, so that condition \eqref{eq:temporal} holds with accuracy $\eps_{t}$. Within this loop, we assume that the step size $\gamma_{k}$ is fixed to $\gamma_{t}\in(0,\frac{2\alpha}{\Lip_{\alpha,\beta_{t}}})$. Indeed, by using Lemma \ref{lem:gradient}, we observe for $k\geq 1$:
\begin{align*}
\norm{z^{k}-\bar{u}_{\alpha,\beta_{t}}(w^{t})}&\leq \frac{1+\gamma_{t}\Lip_{\alpha,\beta_{t}}}{\alpha}\norm{\scrG^{\gamma_{t}}_{\alpha,\beta_{t}}(z^{k}_{t},w^{t})}\\
&=\frac{1+\gamma_{t}\Lip_{\alpha,\beta_{t}}}{\alpha\gamma_{t}}\norm{z^{k}-T_{\gamma_{t}}(z^{k})}\\
&= \frac{1+\gamma_{t}\Lip_{\alpha,\beta_{t}}}{\alpha\gamma_{t}}\norm{\frac{v^{k+1}-z^{k}}{\theta_{k}}+\delta_{k}}\\ 
&\leq \frac{1+\gamma_{t}\Lip_{\alpha,\beta_{t}}}{\alpha\theta_{k}\gamma_{t}}\left(\norm{v^{k+1}-z^{k}}+\theta_{k}\norm{\delta_{k}}\right).
\end{align*}
As an illustrative example, let us assume that error model is given by $\norm{\delta_{k}}=d_{k}\eps_{t}$ for a sequence $(d_{k})_{k\in\N}$. Then, there exists $D>0$ such that $(d_{k})\subseteq[0,D]$, and thus we can evaluate the previous inequality at the last iteration counter of the inner loop $k=\K_{t}\equiv \K_{\alpha,\beta_{t}}(\eps_{t},w^{t})$, to obtain
\begin{align}
\norm{w^{t+1}-\bar{u}_{\alpha,\beta_{t}}(w^{t})}&\leq \frac{1+\gamma_{t}\Lip_{\alpha,\beta_{t}}}{\alpha\underline{\theta}}\left(\eps_{t}+D\bar{\theta}\eps_{t}\right) \nonumber\\
&=\frac{(1+\bar{\theta}D)(1+\gamma_{t}\Lip_{\alpha,\beta_{t}})}{\alpha\underline{\theta}}\eps_{t}\eqdef\ce_{t}\label{eq:finalouter}.
\end{align}
Having obtained this new anchor point, we update the parameters $\eps_{t+1},\beta_{t+1},\gamma_{t+1}$ and restart the inner loop with these parameters and the new anchor point $w^{t+1}$. \\
We summarize our approach with the following method. 

\begin{algorithm}
\caption{Procedure $\texttt{IKM}(\alpha,\beta,\eps,\gamma,w)$}
(1) Let $t=1$, given $T,(\beta_t)_{t\in\N},(\eps_t)_{t\in\N},\K_{\alpha,\beta_t}(\eps,w^t),(\gamma_{k})_{k\in\N},(\tau_{k})_{k\in\N},(\theta_{k})_{k\in\N}$,\\
 $v^{0}=v^{1}=w^1$\\
(2) While $t<T$\\
(3) \For{$k=1,\ldots,\K_{\alpha,\beta_t}(\eps_t,w^t)$}{ 
 $z^{k}=v^{k}+\tau_{k}(v^{k}-v^{k-1})$\\ 
 $v^{k+1}=(1-\theta_{k})z^{k}+\theta_{k}\tilde{T}_{k}(z^{k})$
}
Return $v^{\K_{\alpha,\beta_t}(\eps_t,w^t)+1}$\\
(4) Update $w^{t+1}=v^{\K_{\alpha,\beta_{t}}(\eps_{t},w^{t})+1}$\\
(5) Set $t=t+1$ and go to (2)
\end{algorithm}

\section{Convergence Analysis}

\subsection{Inner Loop}
\noindent In the following, we simplify the notation by setting
\begin{align*}
&Q_k\eqdef 1-\theta_k+2\theta_kq_k^2 &E(\delta_k)\eqdef 2\theta_k\norm{\delta_k}^2 &\quad\lambda_k=\frac{1-\theta_k}{\theta_k}.
\end{align*}

\begin{assumption}\label{ass:parameterineq}
The parameter sequences employed in the inner loop satisfy
\begin{equation}\label{eq:parameterineq}
Q_k\tau_k(1+\tau_k)+\lambda_k\tau_k(1-\tau_k)-Q_k\lambda_{k-1}(1-\tau_{k-1})\leq 0.
\end{equation}
\end{assumption}

\begin{theorem}\label{th:MainInexact}
Assume that $(\theta_{k}\delta_{k})_{k}\in\ell^{2}_{+}(\N)$, and Assumptions \ref{ass:parameters} and \ref{ass:parameterineq} hold. If $(z^{k},v^{k})_{k}$ is generated by procedure $\texttt{IKM}(\alpha,\beta,\eps,w)$ without stopping criterion, then $(z^{k},v^{k})$ converge strongly to $\bar{u}_{\alpha,\beta}(w)$. Moreover $\sum_{k\geq 1}\norm{v^{k}-\bar{u}_{\alpha,\beta}(w)}^{2}<\infty$. 
\end{theorem}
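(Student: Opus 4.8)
The plan is to run an energy (Lyapunov) argument in the spirit of \cite{Alvarez:2001aa,Cortild:2024aa}: combine a perturbed contraction estimate for the relaxed forward--backward step with the inertial identity of Lemma~\ref{lem1}, absorb the momentum into a velocity-augmented energy, use Assumption~\ref{ass:parameterineq} to annihilate the surplus momentum term, and close with Lemma~\ref{lem3}. Fix the anchor $w$ and abbreviate $\bar u:=\bar u_{\alpha,\beta}(w)$, $T_k:=T_{\gamma_k}$, $w_k:=\norm{v^k-\bar u}^2$; recall from the discussion preceding the statement that $\bar u$ is the unique fixed point of each $T_k$ and that $T_k$ is a $q_k$-contraction, and note that by Assumption~\ref{ass:parameters} we have $\theta_k\in[\underline\theta,\bar\theta]$, $\tau_k\le\bar\tau\le 1$ with $(\tau_k)$ nondecreasing. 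We additionally use the standard normalization $Q_k\le\bar Q<1$ (which holds once the step sizes, equivalently the proximal strength $\alpha$, are chosen so that $q_k<1/\sqrt2$). Finally observe $\sum_kE(\delta_k)=2\sum_k\theta_k\norm{\delta_k}^2<\infty$ by the standing hypothesis $(\theta_k\delta_k)_k\in\ell^2_+(\N)$.

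\emph{Step 1 (one-step estimate).} Writing $v^{k+1}=(1-\theta_k)z^k+\theta_k(T_k(z^k)+\delta_k)$ and applying the convexity identity of Lemma~\ref{lem1} (with $\xi=-\theta_k$) to $z^k-\bar u$ and $T_k(z^k)+\delta_k-\bar u$, then bounding $\norm{T_k(z^k)+\delta_k-\bar u}^2\le 2q_k^2\norm{z^k-\bar u}^2+2\norm{\delta_k}^2$ and using the identity $\theta_k(1-\theta_k)\norm{z^k-(T_k(z^k)+\delta_k)}^2=\lambda_k\norm{v^{k+1}-z^k}^2$, one obtains
\begin{equation}\label{eq:plan-one}
\norm{v^{k+1}-\bar u}^2+\lambda_k\norm{v^{k+1}-z^k}^2\le Q_k\norm{z^k-\bar u}^2+E(\delta_k).
\end{equation}
\emph{Step 2 (unfolding the inertia).} Lemma~\ref{lem1} applied to $z^k-\bar u=(v^k-\bar u)+\tau_k(v^k-v^{k-1})$ gives
\begin{equation}\label{eq:plan-iner}
\norm{z^k-\bar u}^2=(1+\tau_k)w_k-\tau_k w_{k-1}+\tau_k(1+\tau_k)\norm{v^k-v^{k-1}}^2,
\end{equation}
while writing $v^{k+1}-z^k=(v^{k+1}-v^k)-\tau_k(v^k-v^{k-1})$ and invoking Lemma~\ref{lem2} with $t=\tau_k$ yields
\[
\norm{v^{k+1}-z^k}^2\ge(1-\tau_k)\norm{v^{k+1}-v^k}^2-\tau_k(1-\tau_k)\norm{v^k-v^{k-1}}^2.
\]
Introduce the energy $\phi_k:=w_k+\lambda_{k-1}(1-\tau_{k-1})\norm{v^k-v^{k-1}}^2$. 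Substituting the last two displays into \eqref{eq:plan-one}, the coefficient of $\norm{v^k-v^{k-1}}^2$ surviving on the right is exactly $Q_k\tau_k(1+\tau_k)+\lambda_k\tau_k(1-\tau_k)$, which by Assumption~\ref{ass:parameterineq} is $\le Q_k\lambda_{k-1}(1-\tau_{k-1})$; replacing it by this larger quantity and recognizing $Q_k\lambda_{k-1}(1-\tau_{k-1})\norm{v^k-v^{k-1}}^2=Q_k(\phi_k-w_k)$ produces the clean recursion
\begin{equation}\label{eq:plan-rec}
\phi_{k+1}\le Q_k\phi_k+Q_k\tau_k(w_k-w_{k-1})+E(\delta_k).
\end{equation}
\emph{Step 3 (closing).} Set $C_k:=\phi_k-Q_{k-1}\tau_{k-1}w_{k-1}$. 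From \eqref{eq:plan-rec}, $C_{k+1}\le Q_k\phi_k-Q_k\tau_k w_{k-1}+E(\delta_k)=Q_kC_k+Q_k(Q_{k-1}\tau_{k-1}-\tau_k)w_{k-1}+E(\delta_k)\le Q_kC_k+E(\delta_k)$, the last inequality using $Q_{k-1}\le 1$, $\tau_{k-1}\le\tau_k$ and $w_{k-1}\ge 0$. Since $Q_k\le\bar Q<1$ and $\sum_kE(\delta_k)<\infty$, Lemma~\ref{lem3} gives $\sum_k[C_k]_+<\infty$. Moreover $C_k\ge w_k-Q_{k-1}\tau_{k-1}w_{k-1}\ge w_k-d\,w_{k-1}$ with $d:=\sup_kQ_k\tau_k\le\bar Q<1$, so the last clause of Lemma~\ref{lem3} yields $\sum_kw_k<\infty$; in particular $v^k\to\bar u$ strongly and $\sum_k\norm{v^k-\bar u}^2<\infty$. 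Finally $\norm{z^k-\bar u}\le\norm{v^k-\bar u}+\bar\tau(\norm{v^k-\bar u}+\norm{v^{k-1}-\bar u})\to 0$, so $z^k\to\bar u$ as well.

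The genuinely delicate point is Step 2: one must choose the weight $\lambda_{k-1}(1-\tau_{k-1})$ of the velocity term in $\phi_k$ (and the parameter $t=\tau_k$ in Lemma~\ref{lem2}) so that, after substitution, the residual $\norm{v^k-v^{k-1}}^2$-coefficient is \emph{precisely} the left-hand side of \eqref{eq:parameterineq} and hence is annihilated by hypothesis; any other choice leaves an uncontrolled momentum term and the recursion does not close. The second subtlety is that the reduced recursion \eqref{eq:plan-rec} must be made genuinely contractive, which is where $\bar Q<1$ — the strict contractivity of $T_{\gamma_k}$ for step sizes in the admissible range — enters: it is this, rather than mere nonexpansiveness, that forces $w_k\to 0$ (and not just ``$w_k$ converges'') and simultaneously produces the gap $d<1$ in $C_k\ge w_k-d\,w_{k-1}$ needed for square-summability. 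The inexactness causes no real obstruction, since the resolvent error enters only through the additive term $E(\delta_k)=2\theta_k\norm{\delta_k}^2$, so that the hypothesis $(\theta_k\delta_k)_k\in\ell^2$ is exactly what makes $\sum_kb_k<\infty$ in Lemma~\ref{lem3}.
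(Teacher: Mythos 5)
Your proof is correct and follows essentially the same route as the paper: the same one-step estimate via Lemma~\ref{lem1} and the $q_k$-contraction, the same inertial unfolding with Assumption~\ref{ass:parameterineq} annihilating the velocity term, and the same closing via Lemma~\ref{lem3}. The only (immaterial) difference is your choice of Lyapunov function $C_k=\phi_k-Q_{k-1}\tau_{k-1}w_{k-1}$ versus the paper's $V_k=\phi_k-\tau_{k-1}w_{k-1}$, and your explicit flagging of the requirement $Q_k\le\bar Q<1$, which the paper also uses (implicitly) when invoking Lemma~\ref{lem3}.
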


\begin{proof}
To simplify the notation we abbreviate $\bar{u}\equiv\bar{u}_{\alpha,\beta}(w)$. We start with applying Lemma \ref{lem1} with $\xi=-\theta_k$

\begin{align*}
&\norm{v^{k+1}-\bar{u}}^2=\norm{(1-\theta_k)z^k+\theta_{k}\tilde{T}_{k}(z^k)-\bar{u}}^2\\
&=\norm{z^k-\bar{u}-\theta_k(-\tilde{T}_{k}(z^k)+z^k)}^2\\
&=(1-\theta_k)\norm{z^k-\bar{u}}^2-\theta_k(1-\theta_k)\norm{z^k-\tilde{T}_{k}(z^k)}^2\\
&\;\;\;\;\;\;\;\;+\theta_k\norm{\tilde{T}_{k}(z^k)-\bar{u}}^{2}.
\end{align*}

\noindent Next, we employ Lemma \ref{lem1} with $\xi=\tau_{k}$ to obtain
\begin{align*}
\norm{z^{k}-\bar{u}}^{2}&=\norm{v^{k}+\tau_{k}(v^{k}-v^{k-1})-\bar{u}}^{2}\\
&=(1+\tau_{k})\norm{v^{k}-\bar{u}}^{2}+\tau_{k}(1+\tau_{k})\norm{v^{k}-v^{k-1}}^{2}\\
&\;\;\;\;\;\;\;\;-\tau_{k}\norm{v^{k-1}-\bar{u}}^{2}.
\end{align*}

\noindent Using that $\bar{u}=T_{\gamma_k}(\bar{u})$ for all $k$ and the $q_k$-contraction property of the mapping $T_{\gamma_k}$, we obtain 
\begin{align*}
&\norm{\tilde{T}_{k}(z^k)-\bar{u}}^2=\norm{T_{\gamma_k}(z^k)+\delta_k-\bar{u}}^2\\
&\leq 2q_k^2\Big[(1+\tau_k)\norm{v^k-\bar{u}}^2+\tau_k(1+\tau_k)\norm{v^k-v^{k-1}}^2\\
&\;\;\;\;\;\;\;\;-\tau_k\norm{v^{k-1}-\bar{u}}^2\Big]+2\theta_{k}\norm{\delta_k}^2.
\end{align*}

\noindent Substituting all these terms, we can continue our development with 
\begin{align*}
\norm{v^{k+1}-\bar{u}}^{2}&\leq (1-\theta_{k}+2\theta_kq_k^{2})\Big[(1+\tau_{k})\norm{v^{k}-\bar{u}}^{2}\\
&\;\;\;+\tau_{k}(1+\tau_{k})\norm{v^{k}-v^{k-1}}^{2}-\tau_{k}\norm{v^{k-1}-\bar{u}}^{2} \Big]\\
&\;\;\;-\theta_{k}(1-\theta_{k})\norm{z^{k}-\tilde{T}_{k}(z^{k})}^{2}+2\theta_{k}\norm{\delta_k}^2.
\end{align*}

\noindent As a next step we take a look at the second term. Notice that $\tilde{T}_{k}(z^k)-z^k=\frac{v^{k+1}-z^k}{\theta_k}$. Applying again Lemma \ref{lem1} we get
\begin{align*}
-\theta_k^2 &\norm{\tilde{T}_{k}(z^k)-z^k}^2=-\norm{v^{k+1}-z^k}^2\\
&=-\norm{v^{k+1}-v^k-\tau_k(v^k-v^{k-1})}^2\\
&\leq-(1-\tau_k)\norm{v^{k+1}-v^k}^2+\tau_k(1-\tau_k)\norm{v^k-v^{k-1}}^2.
\end{align*}
Define $\lambda_{k}\eqdef (1-\theta_{k})/\theta_{k}$, with $\theta_k\in(0,1)$ for all $k$ and multiply both sides in the display above by $\lambda_{k}$ in order to arrive at 
\begin{align*}
-\theta_k(1-\theta_k)\norm{\tilde{T}_{k}(z^k)-z^k}^2\leq&-\lambda_k(1-\tau_k)\norm{v^{k+1}-v^k}^2\\
&+\lambda_k\tau_k(1-\tau_k)\norm{v^k-v^{k-1}}^2.
\end{align*}

\noindent Combining all these estimates, together with some elementary algebra, we can continue our energy bound as 
\begin{align*}
\norm{v^{k+1}-\bar{u}}^2&\leq Q_k\Big[(1+\tau_{k})\norm{v^{k}-\bar{u}}^{2}+\tau_{k}(1+\tau_{k})\norm{v^{k}-v^{k-1}}^{2}\\
&\; -\tau_{k}\norm{v^{k-1}-\bar{u}}^{2} \Big]\ - \lambda_k(1-\tau_k)\norm{v^{k+1}-v^k}^2\\
&+\lambda_k\tau_k(1-\tau_k)\norm{v^k-v^{k-1}}^2+2\theta_k\norm{\delta_k}^2.
\end{align*}


\noindent Rearranging the terms and evoking Assumption \eqref{ass:parameterineq} we get
\begin{align*}
&\norm{v^{k+1}-\bar{u}}^2+\lambda_k(1-\tau_k)\norm{v^{k+1}-v^k}^2\\
&\leq Q_k(1+\tau_k)\norm{v^{k}-\bar{u}}^2-Q_k\tau_k\norm{v^{k-1}-\bar{u}}^2\\
&+Q_k\lambda_{k-1}(1-\tau_{k-1})\norm{v^{k}-v^{k-1}}^2+E(\delta_k).
\end{align*}

\noindent Subtracting $\tau_k\norm{v^k-\bar{u}}^2$ from both sides and using that $(\tau_{k})_{k}$ is monotonically increasing, we can continue our estimation by 
\begin{align*}
\norm{v^{k+1}-\bar{u}}^2&-\tau_k\norm{v^k-\bar{u}}^2+\lambda_k(1-\tau_k)\norm{v^{k+1}-v^k}^2\\
&\leq Q_k\norm{v^k-\bar{u}}^2-Q_k\tau_{k-1}\norm{v^{k-1}-\bar{u}}^2\\
&\;\;\;\;\;+Q_k\lambda_{k-1}(1-\tau_{k-1})\norm{v^{k}-v^{k-1}}^2+E(\delta_k).
\end{align*}

\noindent Consequently, if we define 
\begin{equation*}
V_{k}\eqdef\norm{v^{k}-\bar{u}}^{2}-\tau_{k-1}\norm{v^{k-1}-\bar{u}}^{2}+\lambda_{k-1}(1-\tau_{k-1}) \norm{v^{k}-v^{k-1}}^{2}, 
\end{equation*}
we obtain $V_{k+1}\leq Q_{k} V_{k}+E(\delta_k)$ for all $k\geq 1$. Notice that $E(\delta_k)$ is indeed summable and $Q_k\in(0,1)$ and therefore we can  apply Lemma \ref{lem3} and we obtain $\sum_{k=1}^{\infty}[V_k]_+<\infty$, 
and hence $[V_k]_+$ converges to $0$. Moreover, 
\[
V_k\geq\norm{v^k-\bar{u}}^2-\tau_{k-1}\norm{v^{k-1}-\bar{u}}^{2}\geq \norm{v^k-\bar{u}}^2-\bar{\tau}\norm{v^{k-1}-\bar{u}}^{2}, 
\]
where $\bar{\tau}=\sup_{k}\tau_{k}\in(0,1)$. Applying again Lemma \ref{lem3} with the identification $w_k=\norm{v^k-\bar{u}}^2$ and $d=\bar{\tau}$, we can conclude $\sum_{k=1}^{\infty}\norm{v^k-\bar{u}}^2<\infty$ and therefore $\norm{v^k-\bar{u}}^2\to 0$ as $k\to\infty$. 
\end{proof}

\subsection{Outer Loop}

\noindent We now proof the asymptotic convergence of the sequence $(w^{t})_{t\in\N}$ produced in the outer loop of the method. Let $\scrS_{1}\eqdef \zer(\opG+\NC_{\scrS_{0}})$. Note that $\scrS_{1}\subseteq\scrS_{0}$, and $\scrS_{0}$ is a closed convex and nonempty set. 

\begin{theorem}
Let Assumptions \ref{ass:1} and \ref{ass:slowcontrol} hold. Then, the outer loop of our method $\texttt{IKM}(\alpha,\beta,\eps,\gamma,w)$ produces a bounded sequence $\{w^{t}\}_{t\in\N}$ whose weak limit points are contained in $\scrS_{1}$. 
\end{theorem}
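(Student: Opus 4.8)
The plan is to establish (i) boundedness of $(w^t)$, (ii) that the "velocity" $w^{t+1}-\bar u_{\alpha,\beta_t}(w^t)\to 0$ along a suitable asymptotic regime, and then (iii) identify the weak limit points using the parametric structure of $\Phi_{\alpha,\beta}$, the maximal monotonicity of $\opA+\opF$, and the defining inequality for $\scrS_1$. First I would exploit the key fixed-point relation: $\bar u_{\alpha,\beta_t}(w^t)=\bar u_t$ satisfies $-\opF(\bar u_t)-\beta_t\opG(\bar u_t)-\alpha(\bar u_t-w^t)\in\opA(\bar u_t)$. Since $\dom(\opA)$ is bounded, $(\bar u_t)$ is automatically bounded; combined with \eqref{eq:temporal}/\eqref{eq:finalouter} and $\ce_t/\beta_t\to 0$ (so in particular $\ce_t\to 0$, using that $(\beta_t)$ can be taken bounded or at least that $\ce_t$ is small), this gives $(w^t)$ bounded. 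I would be slightly careful here: Assumption~\ref{ass:slowcontrol} only gives $\ce_t/\beta_t\to0$, so I would argue boundedness of $(w^t)$ directly from $w^{t+1}=v^{K_t}_t$ lying within $\ce_t$ of the bounded set $\dom(\opA)\supseteq\{\bar u_t\}$ (and $\ce_t$ bounded).

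The analytical core is a Fejér-type / "quasi-Fejér with vanishing Tikhonov" estimate. Fix any $u^\ast\in\scrS_1\subseteq\scrS_0=\zer(\opA+\opF)$. Using monotonicity of $\opA$ applied to the pair $(\bar u_t, -\Phi_{\alpha,\beta_t}(\bar u_t,w^t))\in\gr(\opA)$ and $(u^\ast,-\opF(u^\ast))\in\gr(\opA)$ (since $-\opF(u^\ast)\in\opA(u^\ast)$), I get
\[
\inner{\bar u_t-u^\ast,\ \opF(\bar u_t)-\opF(u^\ast)+\beta_t\opG(\bar u_t)+\alpha(\bar u_t-w^t)}\le 0.
\]
Dropping the nonnegative monotonicity term $\inner{\bar u_t-u^\ast,\opF(\bar u_t)-\opF(u^\ast)}\ge0$ yields
\[
\alpha\inner{\bar u_t-u^\ast,\ \bar u_t-w^t}\le -\beta_t\inner{\bar u_t-u^\ast,\opG(\bar u_t)}.
\]
Rewriting $\inner{\bar u_t-u^\ast,\bar u_t-w^t}=\tfrac12(\norm{\bar u_t-u^\ast}^2-\norm{w^t-u^\ast}^2+\norm{\bar u_t-w^t}^2)$ and then passing from $\bar u_t$ to $w^{t+1}$ via $\norm{w^{t+1}-\bar u_t}\le\ce_t$ (a few Cauchy–Schwarz/triangle steps, using boundedness to absorb cross terms into an $O(\ce_t)$ remainder), I obtain an inequality of the shape
\[
\norm{w^{t+1}-u^\ast}^2\le \norm{w^t-u^\ast}^2 - \tfrac{2\beta_t}{\alpha}\inner{\bar u_t-u^\ast,\opG(\bar u_t)} + R_t,
\]
where $\sum_t R_t<\infty$ or at least $R_t/\beta_t\to0$ (this is exactly where $\ce_t/\beta_t\to0$ enters).

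From here I would run the standard argument for monotone iterations with a vanishing regularizer. First, if $w^{t_j}\wlim \bar w$ along a subsequence, I show $\bar w\in\scrS_0$: from $\alpha(\bar u_t-w^t)=-\opF(\bar u_t)-\beta_t\opG(\bar u_t)-a_t$ with $a_t\in\opA(\bar u_t)$ and boundedness, one gets $\bar u_t-w^t$ bounded, hence (since $\bar u_t,w^t$ are both bounded) extract a further subsequence with $\bar u_{t_j}\wlim \bar u$; weak sequential continuity of $\opF,\opG$ plus the $\ce_t$-closeness and $\beta_t\to$ (bounded) let me pass to the limit in the inclusion to conclude $\bar u\in\zer(\opA+\opF)=\scrS_0$ — and $\bar u=\bar w$ because $\norm{\bar u_t-w^{t+1}}\le\ce_t\to0$. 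Second, to upgrade $\bar w\in\scrS_0$ to $\bar w\in\scrS_1$: summing the Fejér inequality over $t$, since $\sum_t\beta_t=\infty$ (Assumption~\ref{ass:slowcontrol}) and $\norm{w^t-u^\ast}^2$ is bounded with summable remainder, $\liminf_t \inner{\bar u_t-u^\ast,\opG(\bar u_t)}\le 0$; choosing the subsequence achieving this liminf and using $\opG$ monotone together with weak continuity gives $\inner{\opG(\bar w),\bar w-u^\ast}\le 0$, i.e. $\inner{\opG(\bar w),u^\ast-\bar w}\ge0$ for some (hence, by working a little harder with all $u^\ast\in\scrS_1$, the relevant) point, and then Lemma~\ref{lem:solution} promotes $\bar w$ to a member of $\scrS_1=\zer(\opG+\NC_{\scrS_0})$.

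The main obstacle I anticipate is the bookkeeping in step (ii)–(iii): making the "summing and using $\sum\beta_t=\infty$" argument actually pin down the limit point in $\scrS_1$ rather than merely in $\scrS_0$, while simultaneously controlling the error terms $R_t$ so that they do not dominate the $\beta_t$-weighted signal term — this is precisely the balance encoded in $\ce_t/\beta_t\to0$, and getting the constants to line up (especially the passage $\bar u_t\leftrightarrow w^{t+1}$ and the absorption of $\inner{w^{t+1}-\bar u_t,\,\cdot\,}$ cross terms) is the delicate part. A secondary subtlety is that Lemma~\ref{lem:solution} is stated for a fixed $u^\ast$; to conclude $\bar w\in\scrS_1$ one applies it with the specific $u^\ast\in\scrS_1$ used in the Fejér estimate, which is legitimate since $\scrS_1\ne\emptyset$ (it contains the solution of \eqref{eq:P}, guaranteed nonempty by the standing hypotheses under which the paper operates).
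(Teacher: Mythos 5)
Your overall strategy --- a quasi-Fej\'er estimate with respect to a \emph{fixed} anchor $u^{\ast}\in\scrS_{1}$, followed by summation against $\sum_{t}\beta_{t}=\infty$ to extract $\liminf_{t}\inner{\opG(\bar{u}^{t}),\bar{u}^{t}-u^{\ast}}\leq 0$ --- is a genuinely different route from the paper, which instead tracks the moving anchor $h_{t}=\tfrac12\norm{w^{t}-\Pi_{\scrS_{1}}(w^{t})}^{2}$ and runs a three-way case analysis on the sign of the drift $\beta_{t}\Gamma_{t+1}+R_{t}\ce_{t}$. Your individual estimates are correct: the monotonicity inequality against $(u^{\ast},-\opF(u^{\ast}))\in\gr(\opA)$, the identity for $\inner{\bar{u}^{t}-u^{\ast},\bar{u}^{t}-w^{t}}$, the $O(\ce_{t})$ absorption, and the liminf extraction (after dividing the remainder by $\beta_{t}$, which is exactly where $\ce_{t}/\beta_{t}\to 0$ enters) all go through.

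The genuine gap is in the conclusion. The theorem asserts that \emph{every} weak limit point of $(w^{t})$ lies in $\scrS_{1}$, but your argument only produces \emph{one} subsequence --- the one realizing $\liminf_{t}\inner{\opG(\bar{u}^{t}),\bar{u}^{t}-u^{\ast}}\leq 0$ --- along which the limit point can be fed into Lemma~\ref{lem:solution}. Since the drift term $-\tfrac{2\beta_{t}}{\alpha}\inner{\opG(\bar{u}^{t}),\bar{u}^{t}-u^{\ast}}$ can be positive, $(\norm{w^{t}-u^{\ast}}^{2})$ is not monotone up to summable errors, so you cannot upgrade "one good subsequence" to "all limit points" by the usual Opial/Fej\'er argument; "working a little harder with all $u^{\ast}\in\scrS_{1}$" does not fix this, because the offending subsequence is selected by the liminf, not by the choice of anchor. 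The paper's case analysis is designed precisely to show that $h_{t}\to 0$ along every asymptotically relevant index set, which forces $w^{t}-\Pi_{\scrS_{1}}(w^{t})\to 0$ strongly and hence places all weak limit points in the weakly closed convex set $\scrS_{1}$.

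A secondary gap is the step "every weak limit point lies in $\scrS_{0}$." You propose to "pass to the limit in the inclusion" $-\opF(\bar{u}^{t})-\beta_{t}\opG(\bar{u}^{t})-\alpha(\bar{u}^{t}-w^{t})\in\opA(\bar{u}^{t})$, but the graph of a maximal monotone operator is not sequentially closed in the weak$\times$weak topology, so you must instead test against arbitrary $(v,a)\in\gr(\opA+\opF)$ and use Lemma~\ref{lem:graphMM} (this is what the paper does via \eqref{eq:contra}). Moreover, that limit passage needs $\bar{u}^{t}-w^{t}\to 0$ (at least weakly) along the subsequence; the paper obtains this from $\norm{w^{t+1}-w^{t}}\to 0$ (a by-product of its case analysis) together with $\norm{w^{t+1}-\bar{u}^{t}}\leq\ce_{t}\to 0$, whereas your Fej\'er inequality does not yield $\sum_{t}\norm{\bar{u}^{t}-w^{t}}^{2}<\infty$ because the $\beta_{t}$-weighted drift is not summable. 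Both points are repairable, but they require importing the paper's machinery for controlling $\norm{w^{t+1}-w^{t}}$, at which point the two proofs essentially merge.
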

\begin{proof}
Define the anchor function $h_{t}=\frac{1}{2}\norm{w^{t}-\Pi_{\scrS_{1}}(w^{t})}^{2}.$ We then have 
\begin{align*}
h_{t+1}-h_{t}&=\frac{1}{2}\norm{w^{t+1}-\Pi_{\scrS_{1}}(w^{t+1})}^{2}-\frac{1}{2}\norm{w^{t}-\Pi_{\scrS_{1}}(w^{t})}^{2}\\
&\leq \frac{1}{2}\norm{w^{t+1}-\Pi_{\scrS_{1}}(w^{t})}^{2}-\frac{1}{2}\norm{w^{t}-\Pi_{\scrS_{1}}(w^{t})}^{2} \\
&=-\frac{1}{2}\norm{w^{t+1}-w^{t}}^{2}+\inner{w^{t+1}-w^{t},w^{t+1}-\Pi_{\scrS_{1}}(w^{t})}.
\end{align*}
For all $t\in\N$ we define the unique solution 
\begin{equation}
\bar{u}^{t+1}=\zer(A+\Phi_{t}(\cdot,w^{t})), 
\end{equation}
where $\Phi_{t}(\cdot,w^{t})\equiv\Phi_{\alpha,\beta_{t}}(\cdot,w^{t})=\opF(\cdot)+\beta_{t}\opG(\cdot)+\alpha(\cdot-w^{t})$. Let $(v,a)\in\gr(\opA)$. By monotonicity of $\opA$ and the fact that $(\bar{u}^{t+1},-\Phi_{t}(\bar{u}^{t+1},w^{t}))\in\gr(\opA)$, we conclude that 
\begin{align*}
\inner{\opF(\bar{u}^{t+1})+a+\alpha(\bar{u}^{t+1}-w^{t}),v-\bar{u}^{t+1}}\geq\beta_{t}\inner{\opG(\bar{u}^{t+1}),\bar{u}^{t+1}-v}.
\end{align*}
Therefore, for all $(v,a)\in\gr(\opA)$, 
\begin{align*}
&\alpha\inner{w^{t}-w^{t+1},v-w^{t+1}}\\
&\leq \beta_{t}\inner{\opG(\bar{u}^{t+1}),v-\bar{u}^{t+1}}+\inner{\opF(\bar{u}^{t+1})+a,v-\bar{u}^{t+1}}\\
&\;\;\;+\alpha\left(\inner{\bar{u}^{t+1}-w^{t+1},v-w^{t+1}}+\inner{w^{t}-\bar{u}^{t+1},\bar{u}^{t+1}-w^{t+1}}\right) .
\end{align*}
We now add and subtract terms to estimate the inner products as follows:
\begin{align*}
&\inner{\opG(\bar{u}^{t+1}),v-\bar{u}^{t+1}}\\
&\leq \inner{\opG(w^{t+1}),v-w^{t+1}}+\ce_{t}\left(\norm{\opG(\bar{u}^{t+1})}+\Lip_{\opG}\norm{v-w^{t+1}}\right). 
\end{align*}
In the same way, 
\begin{align*}
&\inner{\opF(\bar{u}^{t+1}),v-\bar{u}^{t+1}}\\
&\leq \inner{\opF(w^{t+1}),v-w^{t+1}}+\ce_{t}\left(\Lip_{\opF}\norm{v-w^{t+1}}+\norm{\opF(\bar{u}^{t+1})}\right),
\end{align*}
and $\inner{a,v-\bar{u}^{t+1}}\leq \inner{a,v-w^{t+1}}+\norm{a}\ce_{t}.$ Moreover we have 
\begin{align*}
&\alpha\left(\inner{\bar{u}^{t+1}-w^{t+1},v-w^{t+1}}+\inner{w^{t}-\bar{u}^{t+1},\bar{u}^{t+1}-w^{t+1}}\right)\\
&\leq\alpha \ce_t\left(\norm{v-w^{t+1}}+\norm{w^{t}-\bar{u}^{t+1}}\right).
\end{align*}
Combining all these estimates, we can continue from the above to obtain 
\begin{equation}\label{eq:mainestimate}
\begin{split}
&\alpha\inner{w^{t}-w^{t+1},v-w^{t+1}}\leq\beta_{t}\inner{\opG(w^{t+1}),v-w^{t+1}}\\
&\;\;+\inner{\opF(w^{t+1})+a,v-w^{t+1}}+\ce_{t}R_{t}(v,a). 
\end{split}
\end{equation}
where $R_{t}:\gr(\opA)\to\R$ is defined $\forall (v,a)\in\gr(\opA)$ as  
\begin{align}
R_{t}(v,a)&\eqdef\norm{a}+\Lip_{\alpha ,\beta}\norm{v-w^{t+1}}+\norm{\opF(\bar{u}^{t+1})}\nonumber\\
&\;\;\;+\beta_t\norm{\opG(\bar{u}^{t+1})}+\alpha\norm{\bar{u}^{t+1}-w^t}. 
\end{align}
Using the inclusion $\scrS_{1}\subseteq\scrS_{0}$, then we can particularise the above estimate by using the point $(v,a)=(\Pi_{\scrS_{1}}(w^{t}),-\opF(\Pi_{\scrS_{1}}(w^{t})))\in\gr(\opA)$,\footnote{Indeed, $\tilde{w}^{t}=\Pi_{\scrS_{1}}(w^{t})\in\scrS_{0}$ and therefore $-\opF(\tilde{w}^{t})\in\opA(\tilde{w}^{t})$.} to obtain 
\begin{align}
h_{t+1} &-h_{t}\leq -\frac{1}{2}\norm{w^{t+1}-w^{t}}^{2}+\frac{\beta_{t}}{\alpha}\inner{\opG(w^{t+1}),\Pi_{\scrS_{1}}(w^{t})-w^{t+1}} \nonumber\\
&+\ce_{t}R_{t}/\alpha+\frac{1}{\alpha}\inner{\opF(w^{t+1})-\opF(\Pi_{\scrS_{1}}(w^{t})),\Pi_{\scrS_{1}}(w^{t})-w^{t+1}} \nonumber\\
&\leq -\frac{1}{2}\norm{w^{t+1}-w^{t}}^{2}+\frac{\beta_{t}}{\alpha}\Gamma_{t+1}+\ce_{t}R_{t}/\alpha.\label{eq:mainbound}
\end{align}
where 
\begin{equation}\label{eq:Gap}
\Gamma_{t}:=\inner{\opG(w^{t}),\Pi_{\scrS_{1}}(w^{t-1})-w^{t}}. 
\end{equation} 
%
and $R_{t}\equiv R_{t}(\Pi_{\scrS_{1}}(w^{t}),-\opF(\Pi_{\scrS_{1}}(w^{t})))$. 
To complete the proof, we consider several cases: 
\paragraph*{Case 1} Assume there exists $T_{0}\in\N$ such that for all $t\geq T_{0}$ it holds $\beta_{t}\Gamma_{t+1}+R_{t}\ce_{t}\leq 0.$ In other words, the set 
\[
I_{0}:=\{t\in\N\vert \beta_{t}\Gamma_{t+1}+R_{t}\ce_{t}\leq 0\}
\]
contains $T_{0}$, and all subsequent iterates. Then, for all $t\in I_{0}$ \eqref{eq:mainbound} immediately yields  
\[
h_{t+1}-h_{t}\leq -\frac{1}{2}\norm{w^{t+1}-w^{t}}^{2}\leq 0.
\]
Hence, $(h_{t})_{t\in I_{0}}$ is monotonically decreasing, so that $\lim_{t\to\infty,t\in I_{0}}h_{t}$ exists. Furthermore, 
\[
\frac{1}{2}\sum_{t=T_{0}}^{T-1}\norm{w^{t+1}-w^{t}}^{2}\leq h_{T_{0}}-h_{T}\leq h_{T_{0}}, 
\]
which implies $\norm{w^{t+1}-w^{t}}\to 0$ for  $t\to\infty,t\in I_{0}.$ Since $\beta_{t}\Gamma_{t+1}\leq \beta_{t}\Gamma_{t+1}+R_{t}\ce_{t}\leq 0 $ for all $t\in I_{0}$, we deduce that $\limsup_{t\to\infty,t\in I_{0}}\Gamma_{t}\leq 0$. We claim that 
\begin{equation}\label{eq:Gamma0}
\limsup_{t\to\infty,t\in I_{0}}\Gamma_{t}=0.
\end{equation}
Suppose not, i.e assume there exists $\bar{\Gamma}<0$ such that $\Gamma_{t}\leq\bar{\Gamma}<0$ for all $t$ sufficiently large. Since $\bar{u}^{t+1}$ and $w^{t}$ are both contained in $\dom(\opA)$ for all $t\geq 1$, and the operators $\opF$ and $\opG$ are Lipschitz, it follows that $(R_{t})_{t\in I_{0}}$ is a bounded sequence. Hence, there exists $M\in\N$ for which $R_{t}\leq M$. This implies that for $T>T_{1}\geq T_{0}$ sufficiently large, we have 
\[
h_{T}-h_{T_{1}}\leq \frac{\bar{\Gamma}}{\alpha}\sum_{t=T_{1}}^{T-1}\beta_{t}+\frac{M}{\alpha}\sum_{t=T_{1}}^{T-1}\ce_{t}.
\]
Assumption \ref{ass:slowcontrol} guarantees that we can choose $T_{1}$ large enough so that $\ce_{t}\leq \frac{-\beta_{t}}{2M}\bar{\Gamma}$. Hence, we can continue from the above display to obtain the bound 
\[
h_{T}-h_{T_{1}}\leq \frac{\bar{\Gamma}}{2\alpha}\sum_{t=T_{1}}^{T-1}\beta_{t}.
\]
Since $(\beta_{t})_{t\in\N}$ is not summable, we conclude $h_{T}\to-\infty$, a contradiction. Hence, \eqref{eq:Gamma0} holds true. 

\noindent Now, let $(w^{n})_{n\in\mathcal{N}}$ be a weakly converging subsequence of $(w^{t})_{t\in\N}$ with weak limit $\tilde{w}$. We can extract such a weakly converging subsequence since, by Assumption \ref{ass:1},  $\dom(\opA)$  is closed and bounded. Suppose $\tilde{w}\notin\zer(\opA+\opF)$. By maximal monotonicity of $\opA+\opF$, there exists $(v,a)\in\gr(\opA+\opF)$ such that 
\begin{equation}\label{eq:contra}
\inner{a-0,v-\tilde{w}}=\inner{a,v-\tilde{w}}<0.
\end{equation}

\noindent For this pair $(v,a)\in\gr(\opA+\opF)$, inequality \eqref{eq:mainestimate} implies 
\begin{align*}
&\alpha\inner{w^{n}-w^{n+1},v-w^{n+1}}\\&\leq\beta_{n}\inner{\opG(w^{n+1}),v-w^{n+1}}+\inner{a,v-w^{n+1}}+\ce_{n}R_{n}(v,a). 
\end{align*}

\noindent Since $\lim_{t\to\infty,t\in I_{0}}\norm{w^{t+1}-w^{t}}=0$, and the sequence $\left(\norm{v-w^{t}}\right)_{t\in I_{0}}$ is bounded, we have 
\[
\inner{w^{n}-w^{n+1},v-w^{n+1}}\leq\norm{w^{n}-w^{n+1}}\cdot\norm{v-w^{n+1}}\to 0. 
\]
Since $\opG$ is monotone, we further see 
\[
\beta_{t'}\inner{\opG(w^{n+1}),v-w^{n+1}}\leq \beta_{n}\inner{\opG(v),v-w^{n+1}}\to 0.
\]

\noindent Moreover, since by Assumption \ref{ass:slowcontrol} $\ce_t\to 0$, we conclude $\ce_{n}R_{n}\to 0.$
Thus, passing to the limit along the weakly converging subsequence, we see that we get
\[
\inner{a,v-\tilde{w}}\geq 0.
\]
This is a contradiction to \eqref{eq:contra}. We therefore conclude that $\tilde{w}\in \scrS_{0}$.\\
\noindent Now consider a subsequence of $(w^{n})_{n\in\mathcal{N}}$, $(w^{m})_{m\in\mathcal{N}'}$ such that 
\[
\Pi_{\scrS_{1}}(w^{m})\wlim \hat{w} \quad\text{and}\quad w^{m+1}\wlim\bar{w}.
\]
Since, $w^{m}\wlim \tilde{w}$ and $w^{m}-w^{m +1}\rightarrow 0$ we have $\tilde{w}=\bar{w}$ by the uniqueness of the weak limit. Moreover the following holds
\[
\Pi_{\scrS_{1}}(w^{m})-w^{m}\wlim\hat{w}-\tilde{w}.
\]

By the definition of $h_{m}$, which we know to converges, we deduce that $\Pi_{\scrS_{1}}(w^{m})-w^{m}$ converges weakly and in norm Therefore it converges strongly. \\
\noindent Now by the weak sequentially continuity of $\opG$, the continuity of the inner product  and  \eqref{eq:Gamma0} the following holds
\[
0=\lim_{m\rightarrow\infty}\inner{\opG(w^{m+1}),\Pi_{\scrS_{1}}(w^{m})-w^{m+1}}=\inner{\opG{\tilde{w}},\hat{w}-\tilde{w}}.
\]
Furthermore, $\scrS_{0}$ is convex and (weakly) closed as it is the zero set of a maximally monotone operator, hence the normal cone of $\scrS_{0}$ is maximally monotone. On the other hand, $\opG$ is monotone and Lipschitz which implies that $\opG+\NC_{\scrS_0}$ is maximally monotone. Therefore $\scrS_{1}=\zer(\opG+\NC_{\scrS_0})$ is convex and weakly closed. It follow that $\tilde{w}\in\scrS_{1}$. \\

\noindent But then, via Lemma \ref{lem:solution}, we see $\tilde{w}\in\scrS_{1}$.

\paragraph*{Case 2} Consider the sets 
\begin{align*}
&I_{1}:=\{t\in\N\vert\;\beta_{t}\Gamma_{t+1}+R_{t}\ce_{t}>0\}=\N\setminus I_{0},\text{ and }\\ 
&I_{2}:=\{t\in I_{1}\vert -\frac{1}{2}\norm{w^{t+1}-w^{t}}^{2}+\frac{\beta_{t}}{\alpha}\Gamma_{t+1}+\frac{R_{t}\ce_{t}}{\alpha}>0\}\subseteq I_{1}.
\end{align*}
Assume that both sets are infinite. It then suffices to show that $(h_{t})_{t\in I_{2}}$ converges to $0$. To this end, observe first that for every $t\in I_{2}$, it holds that 
\begin{equation}\label{eq:estimateCase2}
\beta_{t}\Gamma_{t+1}+R_{t}\ce_{t}>\frac{\alpha}{2}\norm{w^{t+1}-w^{t}}^{2} .
\end{equation}
The sequence of temporal solution candidates $(w^{t})_{t\in I_{2}}$ admits a converging subsequence. Hence, we can take a weakly converging subsequence $(w^{\tau})_{\tau\in\bT}$ with $\bT\subseteq I_{2}$ and weak limit $\tilde{w}$. Then, $(\Gamma_{\tau})_{\tau\in\bT}$ is bounded and thus the last display shows that
$\lim_{\tau\to\infty,\tau\in\bT}\norm{w^{\tau+1}-w^{\tau}}=0$. Reasoning exactly as in Case 1, we conclude $\tilde{w}\in\scrS_{0}$. Additionally, from \eqref{eq:estimateCase2}, we see
\[
\Gamma_{\tau+1}+\frac{R_{\tau}\ce_{\tau}}{\beta_{\tau}}\geq \frac{1}{2\beta_{\tau}}\norm{w^{\tau+1}-w^{\tau}}^{2}\geq 0.
\]
The first addendum on the left-hand side is bounded, and by the slow control assumption \ref{ass:slowcontrol} the second addendum is a null sequence. Hence, $\left\{\frac{\norm{w^{\tau+1}-w^{\tau}}^{2}}{2\beta_{\tau}}\right\}_{\tau\in\bT}$ is bounded. In particular, we have by weak sequential continuity of $\opG$:
\[
\inner{\opG(\tilde{w}),\Pi_{\scrS_{1}}(\tilde{w})-\tilde{w}}\geq 0,
\]
which implies $\tilde{w}\in\scrS_{1}$ by citing Lemma \ref{lem:solution}.

\paragraph*{Case 3} $I_{1}$ is infinite while $I_{2}$ is finite. In this case, $I_{1}\setminus I_{2}$ is infinite and hence the only relevant phase for the asymptotic analysis of the algorithm. The sequence $(h_{t})_{t\in I_{1}\setminus I_{2}}$ is decreasing and hence converging. This means $h_{t+1}-h_{t}\to 0$ for $t\to\infty,t\in I_{1}\setminus I_{2}\equiv I_{3}$. 
Using \eqref{eq:mainbound}, we see 
\begin{align*}
&\underbrace{h_{t+1}-h_{t}}_{\to 0,\text{as }t\to \infty,t\in I_{3}} \\
&\leq \underbrace{-\frac{1}{2}\norm{w^{t+1}-w^{t}}^{2}+\frac{\beta_{t}}{\alpha}\inner{\opG(w^{t+1}),\Pi_{\scrS_{1}}(w^{t})-w^{t+1}}+\frac{\ce_{t}R_{t}}{\alpha}}_{\leq 0\text{ for }t\in I_{3}}.
\end{align*}
Since the sequence $(w^{t})_{t\in I_{3}}$ is bounded and $\beta_{t}\to0$ as well as $\ce_{t}\to0$ as $t\to\infty$, it follows 
\[
\lim_{t\to\infty,t\in I_{3}}\norm{w^{t+1}-w^{t}}=0.
\]
Let now $\tilde{w}$ denote a weak limit point of $(w^{t})_{t\in I_{3}}$. As in case 2, we see $\tilde{w}\in\scrS_{1}$, hence $h_{t}\to 0$ for $t\to\infty,t\in I_{3}$. But since the entire sequence converges, we conclude $\lim_{t\to\infty}h_{t}=0$. 
 \end{proof}

\section{Numerical Experiments}

\noindent We consider the two-player zero-sum game introduced in \cite{samadi2025improved}, given by
\begin{align*}
    &\begin{dcases}
        &\min_{x_1}f_1(x_1, x_2)\coloneqq 20-0.1x_1x_2+x_1 \\
        &\text{s.t.}~ x_1\in X_1\coloneqq [11, 60]
    \end{dcases}\\
    &\begin{dcases}
        &\min_{x_2}f_2(x_1, x_2)\coloneqq -20+0.1x_1x_2-x_1 \\
        &\text{s.t.}~ x_2\in X_2\coloneqq [10, 50].
    \end{dcases}
\end{align*}
We seek a saddle point $(x_1^*, x_2^*)\in X\coloneqq X_1\times X_2$ of the function $f(x_1, x_2)\coloneqq 20-0.1 x_1x_2+x_1$, namely a point that satisfies
\[
    f(x_1^*, x_2)\le f(x_1^*, x_2^*)\le f(x_1, x_2^*)\quad \text{for all $(x_1, x_2)\in X$.}
\]
The solution set is characterized through the inclusion 
\[
    0\in F+\NC_X =\begin{bmatrix}
        0 & -0.1 \\ 0.1 & 0
    \end{bmatrix}\begin{bmatrix}
        x_1 \\ x_2
    \end{bmatrix} + \begin{bmatrix}
        1 \\ 0
    \end{bmatrix} + \NC_X.
\]
The set of solutions is  $X_1\times \{10\}$. We solve the problem 
\[
    \min \left\{\phi(x_1, x_2)\coloneq (x_1, x_2)\in \zer(F+\NC_X)\right\},
\]
where $\phi(x)=\tfrac12\|x\|^2$, whose analytical solution is $(11, 10)$.
Equivalently, we aim to find a point $(x_1^*, x_2^*)\in \zer(F+NC_X)$ such that $\langle \nabla \phi (x_1^*, x_2^*), (w_1, w_2) - (x_1^*, x_2^*)\rangle \ge 0\; \text{for all $(w_1, w_2)\in \zer(F+\NC_X)$}$.
The subproblem of Algorithm $\texttt{IKM}(\alpha,\beta,\eps,\gamma,w)$ is thus given by determining a solution to 
\[  
    \overline u^{t+1}=\zer(F+\NC_X+\beta_t \nabla \phi + \alpha (I-w^t)).
\]
We define the smooth part to be $\Phi=F+\beta_t \nabla \phi + \alpha (I-w^t)$ which is Lipschitz continuous with constant $L = 0.1 + \beta_t + \alpha$. We note that $\Phi$ is $\alpha$-strongly monotone and select a step-size $\gamma={\alpha}/{L^2}$, such that the operator $T$ defined by the forward-backward map is contractive with constant $\sqrt{1-{\alpha^2}/{L^2}}$. We consider the relaxation parameter of the inner loop $\theta_k\equiv \theta\in (0,1)$ to be constant. Across inner loops, we consider the acceleration parameter $\tau_k\equiv \tau$ to be constant and to be the largest value satisfying equation \eqref{eq:parameterineq}. We assume $\beta_t=(t+1)^{-\eta}$, where $\eta=0.55$, and a stopping criterion of  $\varepsilon_t=\bar\varepsilon\cdot {(t+1)^{-2}}$, where $\bar\varepsilon=10^{-3}$. We run a total of $100$ iterations. The iterates for various starting points and different values for $\alpha$, along with the feasible region $X$ are shown in figures \ref{fig:1}, \ref{fig:2} and \ref{fig:3}.

\begin{figure}
\includegraphics[width=8.5cm, height=6cm]{./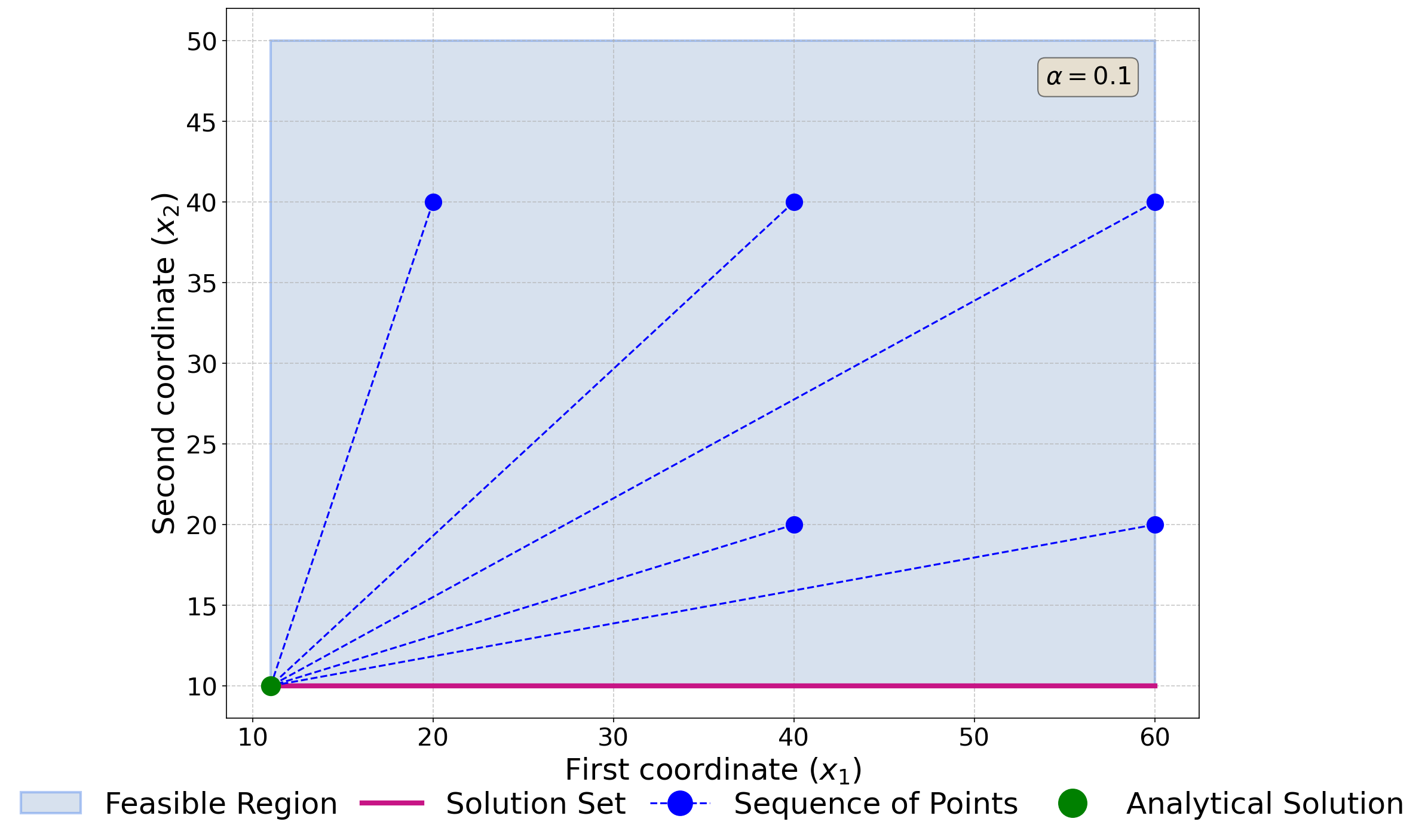}
\caption{Iterates for various initial points and $\alpha=0.1$.}
\label{fig:1}
\end{figure}

\begin{figure}
\includegraphics[width=8.5cm, height=6cm]{./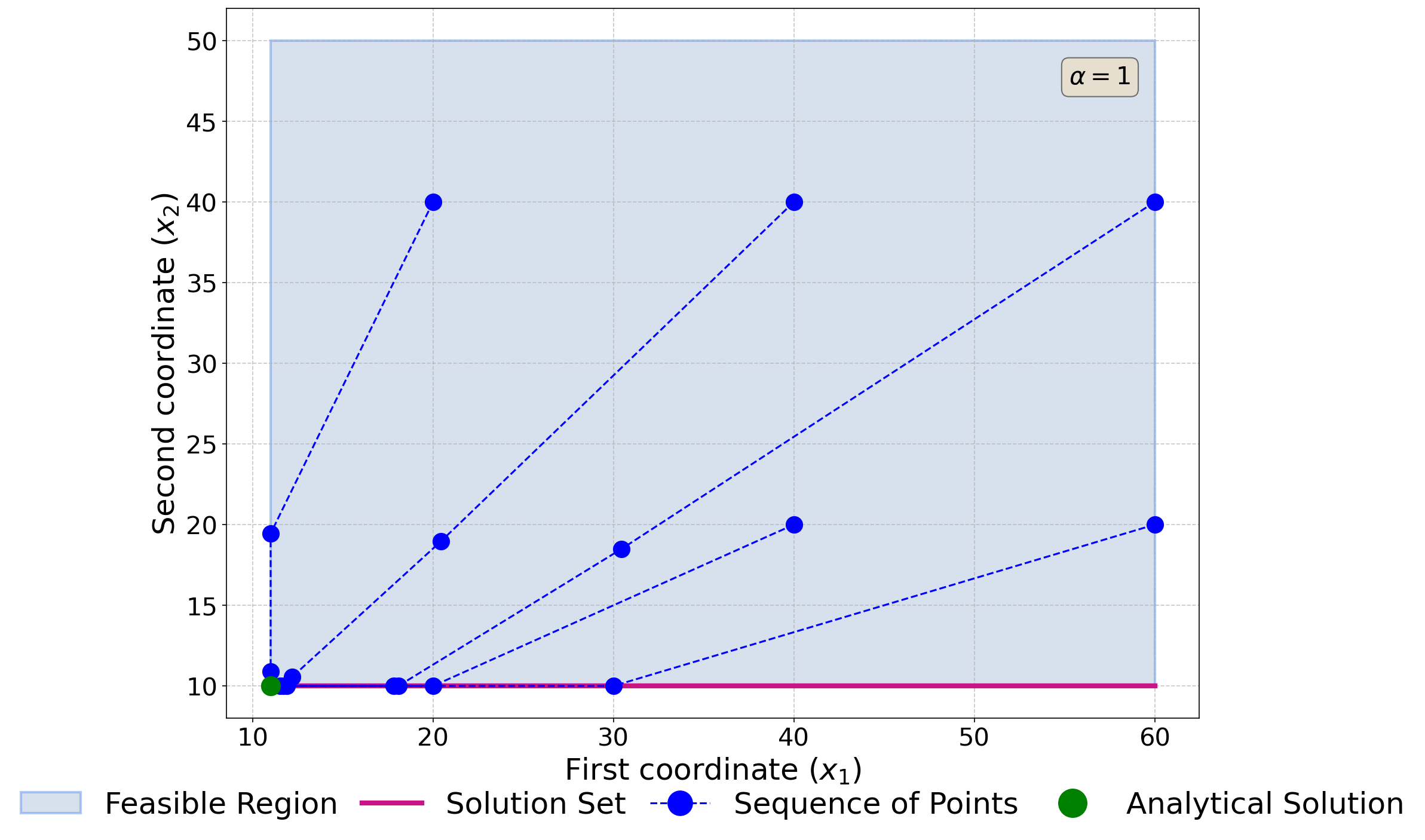}
\caption{Iterates for various initial points and $\alpha=1$.}
\label{fig:2}
\end{figure}

\begin{figure}
\includegraphics[width=8.5cm, height=6cm]{./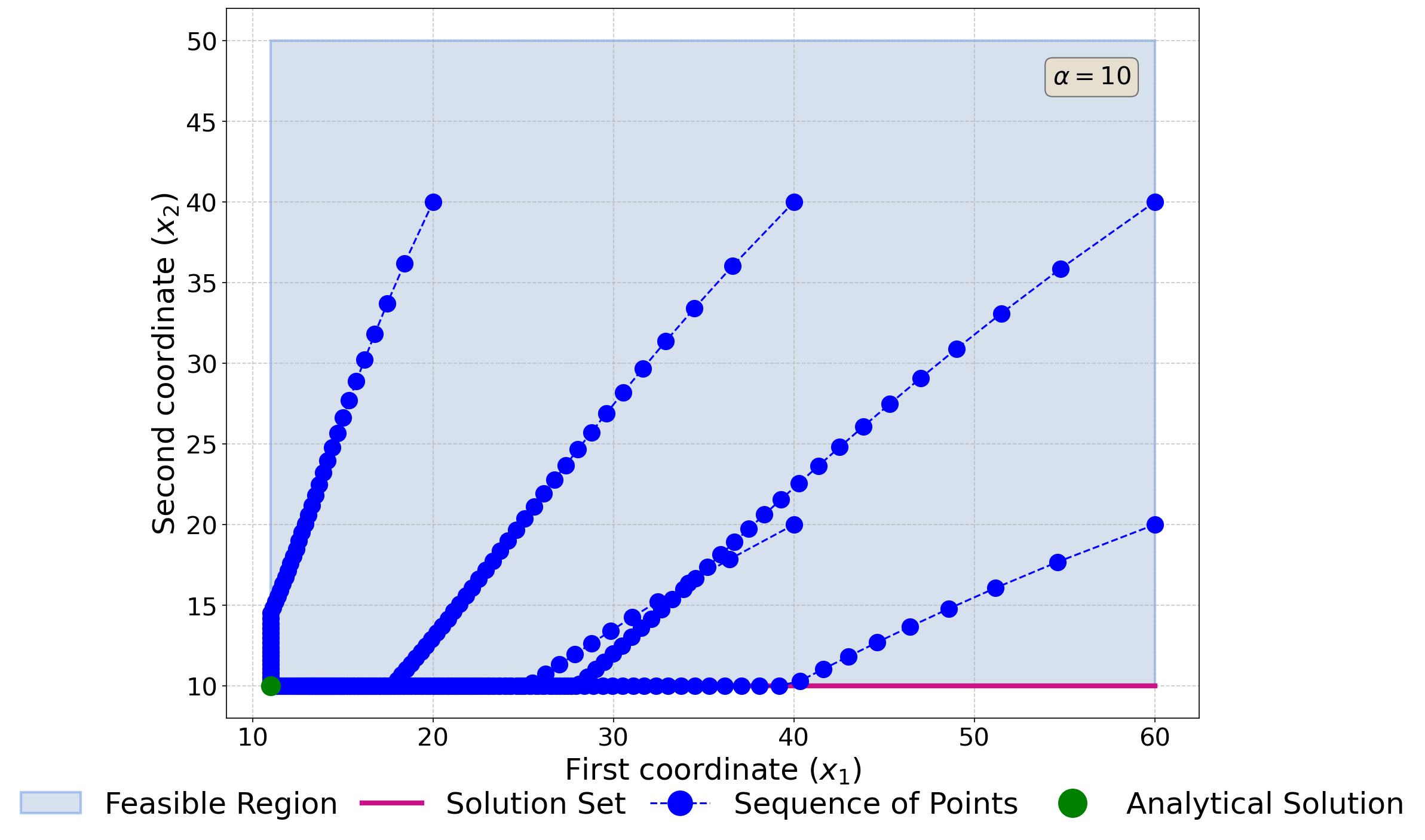}
\caption{Iterates for various initial points and $\alpha=10$.}
\label{fig:3}
\end{figure}

\end{document}